\newcommand{\Ocal}{\mathcal{O}}
\newcommand{\SpecGnC}[1]{\operatorname{Spec}^{\Gamma,\mathrm{nc}}(#1)}
\DeclareMathOperator{\ExtG}{Ext^{\Gamma}}
\newcommand{\RExtG}{\mathbf{R}\!\ExtG}
\newtheorem{theorem}{Theorem}[section]
\newtheorem{corollary}[theorem]{Corollary}
\theoremstyle{definition}
\newtheorem{definition}[theorem]{Definition}
\theoremstyle{remark}
\newtheorem{remark}[theorem]{Remark}
\newtheorem{construction}[theorem]{Construction}
\newcommand{\Ga}{\Gamma}
\newcommand{\tmu}{\tilde{\mu}}
\newcommand{\Cat}[1]{\mathbf{#1}}
\newcommand{\Ab}{\Cat{Ab}}
\newcommand{\QCoh}{\Cat{QCoh}}
\newcommand{\nTGMod}[1]{#1\text{-}\Ga\mathrm{Mod}}
\newcommand{\TorG}{\mathrm{Tor}^{\,\Ga}}
\newcommand{\LTorG}{\mathrm{L}\!\Tor_{\Ga}}
\newcommand{\AffnGamma}{\mathrm{Aff}^{\mathrm{nc}}_{\Ga}}
\newcommand{\Fcal}{\mathcal{F}}
\newcommand{\Gcal}{\mathcal{G}}
\newcommand{\Ecal}{\mathcal{E}}
\newcommand{\Ccal}{\mathcal{C}}
\DeclareMathOperator{\Spec}{Spec}
\DeclareMathOperator{\Hom}{Hom}
\DeclareMathOperator{\End}{End}
\DeclareMathOperator{\Jac}{Jac}
\DeclareMathOperator{\Ann}{Ann}
\DeclareMathOperator{\Sm}{Sm}
\DeclareMathOperator{\Prim}{Prim}
\DeclareMathOperator{\Supp}{Supp}
\DeclareMathOperator{\Stab}{Stab}
\DeclareMathOperator{\Proj}{Proj}
\DeclareMathOperator{\Ext}{Ext}
\DeclareMathOperator{\Tor}{Tor}
\let\RExtG\relax
\let\LTorG\relax
\let\ExtG\relax
\let\TorG\relax
\let\SpecGnC\relax
\newcommand{\SpecGnC}[1]{\operatorname{Spec}^{\Gamma,\mathrm{nc}}(#1)}
\DeclareMathOperator{\ExtG}{Ext^{\Gamma}}
\DeclareMathOperator{\TorG}{Tor^{\Gamma}}
\newcommand{\RExtG}{\mathbf{R}\!\ExtG}
\newcommand{\LTorG}{\mathbf{L}\!\TorG}
\begin{document}
  
  \label{'ubf'}  
\setcounter{page}{1}                                 

\markboth {\hspace*{-9mm} \centerline{\footnotesize \sc
   Put here the left page top label  }
                 }
                { \centerline                           {\footnotesize \sc  
         put here the author's name                                                 } \hspace*{-9mm}              
               }

\vspace*{-2cm}

\begin{center}
{{\Large \textbf { \sc  Sheaf Theory and Derived $\Gamma$-Geometry over the Non-Commutative $\Gamma$-Spectrum} }}\\
\medskip

{\sc Chandrasekhar Gokavarapu }\\
{\footnotesize Lecturer in Mathematics, Government College (Autonomous),
Rajahmundry,A.P., India }\\
{\footnotesize  Research Scholar ,Department  of Mathematics, Acharya Nagarjuna University,Guntur, A.P., India.
}\\
{\footnotesize e-mail: {\it chandrasekhargokavarapu@gmail.com}}

\end{center}

\thispagestyle{empty}

\hrulefill

\begin{abstract}
{\footnotesize
We develop the geometric and homological layer of the theory of non-commutative
$n$-ary $\Gamma$-semirings by constructing a sheaf and derived framework over
their non-commutative $\Gamma$-spectrum.  Starting from a non-commutative
$n$-ary $\Gamma$-semiring $(T,+,\Gamma,\mu)$ and its bi-$\Gamma$-modules, we
define the non-commutative $\Gamma$-spectrum $\Spec_{\Gamma}^{\mathrm{nc}}(T)$,
endow it with a Zariski-type topology, and construct a structure sheaf
$\mathcal{O}_{\Spec_{\Gamma}^{\mathrm{nc}}(T)}$ via localization at prime
$\Gamma$-ideals.  We then introduce quasi-coherent $\Gamma$-sheaves on
$\Spec_{\Gamma}^{\mathrm{nc}}(T)$, show that their category is an exact
category with enough injectives, and interpret the derived functors
$\Ext^{\Gamma}$ and $\Tor^{\Gamma}$ as global cohomological invariants on this
non-commutative $\Gamma$-space.

On the derived level, we construct the derived category
$\mathbf{D}(\QCoh(\Spec_{\Gamma}^{\mathrm{nc}}(T)))$, establish a local--global
correspondence for $\Ext^{\Gamma}$ and $\Tor^{\Gamma}$, and formulate a
non-commutative local duality theorem in the presence of a dualizing complex.
We further introduce derived non-commutative $\Gamma$-stacks and describe a
dg-enhancement of the non-commutative $\Gamma$-spectrum, leading to a spectral
and motivic interpretation of the homological invariants.  Structural
consequences include a Wedderburn--Artin type decomposition in the $n$-ary
$\Gamma$-context, a derived Morita theory for semisimple $n$-ary
$\Gamma$-semirings, and a duality between the primitive $\Gamma$-spectrum and
simple objects in the derived category of quasi-coherent $\Gamma$-sheaves.

These results extend the commutative derived $\Gamma$-geometry developed in our
earlier work to the non-commutative $n$-ary setting and show that
non-commutative $n$-ary $\Gamma$-semirings carry a natural geometric and
cohomological structure comparable to that of non-commutative algebraic
geometry.
}
\end{abstract}

 \hrulefill

{\small}

\indent {\small {\bf 2020 Mathematics Subject Classification:}
16Y60, 16W50, 18G15, 18G80, 14A22.}


\section{Introduction}

The theory of $\Gamma$-rings and $\Gamma$-semirings, initiated by
Nobusawa~\cite{Nobusawa1963,Nobusawa1964} and systematically developed by
Rao and others~\cite{Rao1995,Rao1997,Rao1999,HedayatiShum2011,RaoOrdered2018},
provides a flexible generalization of classical ring theory in which the role
of scalars is played by an external parameter semigroup~$\Gamma$.  In the last
decade, this theory has been extended to ternary and more generally $n$-ary
contexts, leading to the notion of ternary $\Gamma$-semirings and $n$-ary
$\Gamma$-semirings, where the binary product is replaced by an $n$-ary
operation compatible with the $\Gamma$-structure.

In a recent series of papers we developed the algebraic and homological
foundations of ternary and $n$-ary $\Gamma$-semirings.  The basic structural
and radical theory for commutative ternary $\Gamma$-semirings was established
in~\cite{RaoRaniKiran2025,GokavarapuRaoFinite2025,GokavarapuRaoPrime2025},
and a homological and categorical framework for ternary $\Gamma$-modules and
their spectra was introduced in~\cite{GokavarapuDasariHomological2025}.  In the
commutative case, a first version of derived $\Gamma$-geometry and sheaf
cohomology over the spectrum of commutative ternary $\Gamma$-semirings was
developed in~\cite{GokavarapuRaoDerived2025}, where the focus was on
$\Gamma$-schemes associated to commutative ternary $\Gamma$-semirings.

The goal of the present paper is to construct the geometric and derived layer
of the theory in the {\em non-commutative} $n$-ary setting.  Starting from a
non-commutative $n$-ary $\Gamma$-semiring $(T,+,\Gamma,\mu)$, we introduce its
non-commutative $\Gamma$-spectrum $\Spec_{\Gamma}^{\mathrm{nc}}(T)$, equip it
with a $\Gamma$-Zariski topology generated by primes, and construct a
structure sheaf $\mathcal{O}_{\Spec_{\Gamma}^{\mathrm{nc}}(T)}$ by localizing
at multiplicative systems associated to prime $\Gamma$-ideals.  This leads to
a locally $\Gamma$-semiringed space which plays the role of an affine object
in non-commutative $\Gamma$-geometry.

On this geometric substrate we define quasi-coherent $\Gamma$-sheaves and show
that their category $\QCoh(\Spec_{\Gamma}^{\mathrm{nc}}(T))$ is an exact
category with enough injectives in the sense of
Quillen~\cite{Grothendieck1957,Quillen1973,Buehler2010}.  The derived functors
$\Ext^{\Gamma}$ and $\Tor^{\Gamma}$, originally constructed in the algebraic
bi-$\Gamma$-module setting, are then interpreted as cohomological invariants
on the non-commutative $\Gamma$-spectrum, leading to local--global principles,
a non-commutative local duality, and the emergence of a derived
$\Gamma$-stack associated to~$T$.

Beyond the construction of the basic geometric and derived framework, we prove
structural results of Wedderburn--Artin and Morita type in the $n$-ary
$\Gamma$-context, and we establish a duality between the primitive
$\Gamma$-spectrum and simple objects in the derived category of quasi-coherent
$\Gamma$-sheaves.  These results show that non-commutative $n$-ary
$\Gamma$-semirings admit a rich geometry closely parallel to non-commutative
algebraic geometry~\cite{Hartshorne1977,Weibel1994,Neeman2001}, but with an
additional layer coming from the $\Gamma$-parameters and the higher arity of
the product.

The paper is organized as follows.  In Section~2 we fix notation and recall
the basic definitions of non-commutative $n$-ary $\Gamma$-semirings, ideals,
and localizations.  In Section~\ref{sec:sheaves} we define the
non-commutative $\Gamma$-spectrum, construct the structure sheaf, and develop
the sheaf theory and quasi-coherent $\Gamma$-sheaves over
$\Spec_{\Gamma}^{\mathrm{nc}}(T)$.  Section~\ref{sec:geometry} is devoted to
the derived category of quasi-coherent $\Gamma$-sheaves, derived stacks, and
spectral dualities, including categorical Gelfand-type duality and a motivic
interpretation of derived $\Gamma$-cohomology.  In
Section~\ref{sec:structure} we derive structural consequences such as a
Wedderburn--Artin type decomposition and derived Morita theory, and we show
how the primitive $\Gamma$-spectrum can be recovered from simple derived
objects.  Finally, Section~\ref{sec:future} outlines several directions for
further development, including higher-categorical refinements, motivic
$\Gamma$-homotopy, and non-commutative tropicalization.



\section*{Notation and Conventions}

Throughout this paper, $n\ge 2$ is a fixed integer and $\Gamma$ is a
commutative semigroup written additively. The following notation and
conventions will be used globally.

\begin{center}
\begin{longtable}{p{3cm} p{11cm}}
\toprule
\textbf{Symbol} & \textbf{Meaning / Convention} \\
\midrule
\endfirsthead

\toprule
\textbf{Symbol} & \textbf{Meaning / Convention} \\
\midrule
\endhead

\bottomrule
\endfoot

\addlinespace

$T$ 
& Underlying additive commutative monoid of an $n$-ary $\Gamma$-semiring. \\

$\Gamma$ 
& A commutative semigroup of parameters; written additively
$(\Gamma,+_{\Gamma})$. \\

$\tilde{\mu}$
& Fundamental $(n+(n-1))$-ary multiplication map  
\[
\tilde{\mu}\colon T^{n}\times\Gamma^{\,n-1}\longrightarrow T.
\]
Used in the PDF as the structural operation. \\

$[x_1,\dots,x_n]_{\gamma_1,\dots,\gamma_{n-1}}$
& Shorthand for the structural operation
\[
\tilde{\mu}(x_1,\dots,x_n;\gamma_1,\dots,\gamma_{n-1}).
\]
The parameter tuple $(\gamma_1,\dots,\gamma_{n-1})$ always has length $n-1$. \\

$\mu_{(j)}$
& \emph{Positional action} inserting a module element in the $j$-th position:
\[
\mu_{(j)}(x_1,\dots,x_{j-1},m,x_{j+1},\dots,x_n;
\gamma_1,\dots,\gamma_{n-1}).
\]
Indices follow the exact ordering in the PDF. \\

$\bullet^{(j)}_{\gamma}$
& Action of $T$ on a module in the $j$-th slot.  
Notation preserved from the PDF:
$a\bullet^{(j)}_{\gamma} m$ and $m\bullet^{(j)}_{\gamma} a$. \\

$T$-$\Gamma$Mod$_L$, $T$-$\Gamma$Mod$_R$
& Categories of left and right $\Gamma$-modules over $T$, defined via
slot-sensitive insertion of module elements. \\

$T$-$\Gamma$Mod$_{bi}$
& Category of bi-$\Gamma$-modules with compatible left and right
positional actions. This is the ambient additive category used for exact
structures. \\

$\mathrm{BiMod}_{\Gamma}(T)$
& Synonym for $T$-$\Gamma$Mod$_{bi}$ (appears in parts of the PDF). \\

$\Phi$, $\Psi$
& Coequalizer maps used in defining the \emph{positional tensor product}:
\[
\Phi(a,\alpha,m\otimes n)
  = (a\bullet^{(j)}_{\alpha} m)\otimes n,\qquad
\Psi(a,\alpha,m\otimes n)
  = m\otimes (n\bullet^{(k)}_{\alpha} a),
\]
with $j$ the left-slot index and $k$ the right-slot index. \\

$\otimes^{(j,k)}_{\Gamma}$
& Positional tensor product of a left $(j)$-module and a right $(k)$-module,
defined as the coequalizer of $\Phi$ and $\Psi$. \\

$\underline{\Hom}^{(j,k)}_{\Gamma}(M,N)$
& Internal Hom bi-module with left action in slot $j$ and right action in
slot $k$, defined via pointwise addition and positional $\Gamma$-actions. \\

$I\subseteq T$
& A $\Gamma$-ideal: closed under addition and under insertion of elements
into any slot of the $n$-ary multiplication. \\

$T/I$
& Quotient $n$-ary $\Gamma$-semiring with operation
\[
[x_1+I,\dots,x_n+I]_{\gamma_1,\dots,\gamma_{n-1}}
=
[x_1,\dots,x_n]_{\gamma_1,\dots,\gamma_{n-1}}+I.
\] \\

Prime ideal $P$
& A proper $\Gamma$-ideal satisfying  
if
\[
[x_1,\dots,x_n]_{\vec{\gamma}}\in P,
\]
then $x_j\in P$ for some $j$. \\

Exact sequence
& Always means a \emph{Quillen exact sequence}: a conflation  
$A\rightarrowtail B \twoheadrightarrow C$
in $T$-$\Gamma$Mod$_{bi}$.  
No abelian assumption is made. \\

Conflation
& A kernel–cokernel pair in the sense of
Quillen~\cite{Quillen1973}.  
Notation: $A\rightarrowtail B \twoheadrightarrow C$. \\

$\mathrm{Ext}^{r}_{(j,k),\Gamma}$, $\mathrm{Tor}_{r}^{(j,k),\Gamma}$
& Derived functors computed in the exact category
$T$-$\Gamma$Mod$_{bi}$ using positional left/right indices $j$ and $k$. \\

$0$
& Additive zero of any module or semiring (determined by context). \\

\end{longtable}
\end{center}

\noindent
These conventions remain fixed throughout Sections~1--4 (PAPER~G1).  
Slot indices $j$ and $k$, the ordering of $\Gamma$-parameters, and the
structure of coequalizers are all exactly as defined above and will not
vary between sections.

\section{Preliminaries}

We briefly recall the basic notions of non-commutative $n$-ary $\Gamma$-semirings
and their ideals that will be used throughout the paper.  Details and further
examples can be found in~\cite{Rao1995,Rao1997,Rao1999,HedayatiShum2011,
RaoOrdered2018,GokavarapuDasariHomological2025}.

\begin{definition}[$n$-ary $\Gamma$-semiring]
Let $n\geq 2$ be a fixed integer and let $(\Gamma,+_{\Gamma})$ be a commutative
semigroup.  An {\em $n$-ary $\Gamma$-semiring} is a triple
$(T,+,\tmu)$ where $(T,+)$ is a commutative monoid with zero element~$0$ and
\[
\tmu : T^{n}\times \Gamma^{\,n-1} \longrightarrow T,
\qquad
(x_1,\dots,x_n;\gamma_1,\dots,\gamma_{n-1})
   \longmapsto [x_1,\dots,x_n]_{\gamma_1,\dots,\gamma_{n-1}},
\]
is an $(n+(n-1))$-ary operation such that:
\begin{itemize}
  \item $\tmu$ is additive in each $T$-slot;
  \item $\tmu$ is $n$-arily associative (independent of bracketing);
  \item $0$ is absorbing in each $T$-slot:
  \[
  [x_1,\dots,x_{j-1},0,x_{j+1},\dots,x_n]_{\vec{\gamma}} = 0
  \quad\text{for all }1\le j\le n;
  \]
  \item the $\Gamma$-parameters $\gamma_1,\dots,\gamma_{n-1}$ enter
        additively in the sense of the $\Gamma$-semiring axioms
        of~\cite{Rao1995,HedayatiShum2011}.
\end{itemize}
If in addition $\tmu$ is symmetric in the $T$-variables we speak of a
{\em commutative} $n$-ary $\Gamma$-semiring.
\end{definition}

\begin{definition}[$\Gamma$-ideals and prime $\Gamma$-ideals]
A subset $I\subseteq T$ is called a {\em $\Gamma$-ideal} if:
\begin{itemize}
  \item $(I,+)$ is a submonoid of $(T,+)$;
  \item $I$ is closed under insertion into any $T$-slot of the $n$-ary
        product:
        whenever $x_1,\dots,x_n\in T$ and $x_j\in I$ for some $j$, we have
        \[
        [x_1,\dots,x_n]_{\vec{\gamma}} \in I
        \quad\text{for all }\vec{\gamma}\in\Gamma^{n-1}.
        \]
\end{itemize}
A proper $\Gamma$-ideal $P\subsetneq T$ is called {\em prime} if whenever
\[
[x_1,\dots,x_n]_{\vec{\gamma}}\in P
\]
for some $x_1,\dots,x_n\in T$ and $\vec{\gamma}\in\Gamma^{n-1}$, then
$x_j\in P$ for at least one index $j$.
\end{definition}

\begin{definition}[Localizations]
For a subset $S\subseteq T$ which is multiplicatively closed with respect to
$\tmu$ (in the sense of~\cite{Rao1999,GokavarapuDasariHomological2025}), the
localization $S^{-1}T$ is defined by adjoining formal fractions $x/s$ with
$x\in T$ and $s\in S$, modulo the usual congruence, and extending $+$ and
$\tmu$ so that
\[
[x_1/s_1,\dots,x_n/s_n]_{\vec{\gamma}}
   = [x_1,\dots,x_n]_{\vec{\gamma}}\,
     / (s_1\cdots s_n).
\]
For a prime $\Gamma$-ideal $P$ we write $T_P$ for the localization at
$S_P = T\setminus P$.
\end{definition}

In \cite{GokavarapuDasariHomological2025} we introduced slot-sensitive
categories of left, right, and bi-$\Gamma$-modules over $T$ and endowed the
bi-module category with a Quillen exact structure.  In the present paper we
assume this homological background and focus on its geometric globalization
over the non-commutative $\Gamma$-spectrum defined in
Section~\ref{sec:sheaves}.


\section{Sheaf Theory over the Non-Commutative $\Gamma$-Spectrum}
\label{sec:sheaves}

We now globalize the homological algebra developed in
\S\ref{sec:derived} to the geometric setting of the
non-commutative $\Gamma$-spectrum.  
Our goal is to construct a coherent sheaf theory on
$\SpecGnC{T}$ and to interpret the derived functors
$\ExtG$ and $\TorG$ as global cohomological invariants,
in analogy with the classical theory of schemes
\cite{Hartshorne1977, Grothendieck1957, Verdier1996, Weibel1994}.

\subsection{The non-commutative $\Gamma$-spectrum and its topology}

\begin{definition}[Non-commutative $\Gamma$-spectrum]
Let $(T,+,\Gamma,\mu)$ be a non-commutative $n$-ary $\Gamma$-semiring
\cite{Nobusawa1963, Rao1995, HedayatiShum2011}.  
Define
\[
\SpecGnC{T}
   = \{\,P\subsetneq T \mid P \text{ is a prime two-sided $\Gamma$-ideal}\,\},
\]
where primeness means that for all $\Gamma$-ideals $I,J$,
if $\mu(I,J,\Gamma^{n-2})\subseteq P$ then
$I\subseteq P$ or $J\subseteq P$
\cite{DuttaSardar2000, SardarSahaShum2010}.
The topology on $\SpecGnC{T}$ is generated by the subbasis
\[
D(a,\gamma_1,\dots,\gamma_{n-1})
   = \{\,P\in\SpecGnC{T}\mid
        a\notin \mu(P,\Gamma^{n-1})\,\},
\]
which we call the non-commutative Zariski-open sets.
\end{definition}

\begin{remark}[Topology and spectral character]
The topology defined above is spectral in the sense of
Hochster and mirrors the classical Zariski topology
\cite{AtiyahMacdonald1969}.  
It is $T_0$, quasi-compact, and possesses a basis of compact opens
stable under finite intersection, ensuring compatibility with
localization and prime-ideal geometry
as in standard algebraic geometry
\cite{Hartshorne1977}.  
For commutative ternary $\Gamma$-structures, this reduces to the
$\Gamma$-spectrum developed in the commutative derived framework of
\cite{GokavarapuRaoDerived2025}.
\end{remark}


\subsection{Localization and the structure sheaf}
\begin{construction}[Localizations]
For $a\in T$ let $S_a$ be the multiplicative system
generated by all $n$-ary products
$\mu(a,\gamma_1,\dots,\gamma_{n-1})$.
Define the localization
\[
T_a := S_a^{-1}T
   = \Big\{\frac{x}{s}\mid x\in T,\ s\in S_a\Big\},
\]
with addition and $n$-ary multiplication extended by
\[
\frac{x_1}{s_1} + \frac{x_2}{s_2}
   = \frac{\mu(x_1s_2+x_2s_1,\gamma,\dots)}{s_1s_2},\quad
\mu\Big(\frac{x_1}{s_1},\dots,\frac{x_n}{s_n};\gamma_1,\dots,\gamma_{n-1}\Big)
   = \frac{\mu(x_1,\dots,x_n;\gamma_1,\dots,\gamma_{n-1})}
          {s_1\cdots s_n}.
\]
\end{construction}

\begin{construction}[Structure sheaf]
For an open $U\subseteq \SpecGnC{T}$, define
\[
\Ocal_{\SpecGnC{T}}(U)
   = \varprojlim_{P\in U} T_P,
\]
where $T_P$ denotes the localization
of $T$ with respect to the multiplicative system
$S_P = T\setminus P$.
Restriction maps are given by localization homomorphisms
$T_P\to T_Q$ whenever $Q\subseteq P$.
\end{construction}

\begin{theorem}[Local behavior]
$(\SpecGnC{T},\Ocal_{\SpecGnC{T}})$
is a locally $\Gamma$-semiringed space,
and for every $P\in\SpecGnC{T}$ the stalk
\[
\Ocal_{\SpecGnC{T},P}
   = \varinjlim_{P\in U} \Ocal_{\SpecGnC{T}}(U)
   \cong T_P
\]
is a local $\Gamma$-semiring.
\end{theorem}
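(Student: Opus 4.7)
The plan is to establish three things in turn: (i) the sheaf property of $\Ocal_{\SpecGnC{T}}$, (ii) a cofinality argument identifying the stalk at $P$ with $T_P$, and (iii) the locality of $T_P$ as a $\Ga$-semiring.

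For (i), I would work on the subbasis $D(a,\vec{\gamma})$ and first check that over each distinguished open the sections agree with the localization $T_a$ constructed above, in the spirit of Hartshorne II.2. The projective-limit definition makes the separation axiom immediate, since a compatible family $(s_P)_{P\in U}$ is determined by its values at each point. For gluing I would cover $U$ by distinguished opens $D(a_i,\vec{\gamma}_i)$ and paste stalk-families using that each $P\in U$ lies in some $D(a_i,\vec{\gamma}_i)$; compatibility on overlaps reduces, via the transition maps $T_{a_i}\to T_P$, to agreement at the stalk level, which is forced by hypothesis. Slot-sensitivity of the $n$-ary product causes no additional difficulty because the fractions involved share a common choice of $\Ga$-parameters in each localization step.

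For (ii), I would prove that the family
\[
\{\,D(a,\vec{\gamma})\mid a\notin\tmu(P,\Ga^{n-1})\,\}
\]
is cofinal in the directed system of open neighborhoods of $P$; any $U\ni P$ contains such a basic open by the definition of the topology. Consequently
\[
\Ocal_{\SpecGnC{T},P}
   = \varinjlim_{a\notin\tmu(P,\Ga^{n-1})} T_a.
\]
As $a$ ranges over this cofinal family the multiplicative systems $S_a$ exhaust $S_P=T\setminus P$, and the universal property of $\Ga$-localization identifies the colimit with $S_P^{-1}T = T_P$.

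For (iii), I would set $\mathfrak{m}_P\subseteq T_P$ to be the two-sided $\Ga$-ideal generated by the image of $P$ under $T\to T_P$, and show that $\mathfrak{m}_P$ is proper and contains every proper two-sided $\Ga$-ideal of $T_P$. The key point is that any element $y/s\in T_P$ with $y\notin P$ is represented by $y\in S_P$, so there exist $\Ga$-parameters making $y/s$ a two-sided unit for $\tmu$ on $T_P$. Hence $\mathfrak{m}_P$ is the unique maximal two-sided $\Ga$-ideal, making $T_P$ a local $\Ga$-semiring and $(\SpecGnC{T},\Ocal_{\SpecGnC{T}})$ locally $\Ga$-semiringed.

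The main obstacle is step (iii): verifying that non-membership in $\mathfrak{m}_P$ yields genuine two-sided invertibility under the $n$-ary operation $\tmu$. In the binary commutative case this is tautological; here one must supply compatible left and right filler elements together with $\Ga$-parameter tuples of length $n-1$ that witness invertibility from both sides, and check that the resulting \emph{inverses} are well defined in $T_P$ independently of the witnesses chosen. A secondary subtlety is ensuring that $\mathfrak{m}_P$ is closed under insertion of arbitrary $T_P$-elements into every slot of $\tmu$; this reduces to the two-sidedness and $\Ga$-absorbing property of $P$ itself in $T$.
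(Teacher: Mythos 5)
Your proposal follows essentially the same route as the paper's (two-sentence) proof: sheaf axioms from the universal property of localization, stalk identification by a cofinality argument over distinguished opens, and locality of $T_P$ witnessed by $PT_P$ as the unique maximal $\Gamma$-ideal. Where the paper simply asserts these facts, you correctly flesh out the skeleton — and, more importantly, you flag the place where the argument is actually incomplete.

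That flagged gap in step (iii) is real and is not addressed anywhere in the paper. In a binary commutative local ring, "not in the maximal ideal $\Rightarrow$ unit" is immediate; in a non-commutative $n$-ary $\Gamma$-semiring it is not, for two compounding reasons. First, the paper never defines what a unit (or a local $\Gamma$-semiring, or a locally $\Gamma$-semiringed space) is in this setting: an $n$-ary $\Gamma$-semiring need not possess an identity element for $\tmu$, so there is no canonical equation $[y/s,z_2,\dots,z_n]_{\vec{\gamma}}=1$ to solve, and without additive inverses the usual semiring workarounds also need care. Second, even granting a suitable notion, two-sidedness requires producing filler tuples and $\Gamma$-parameter tuples witnessing invertibility from each slot position, and checking these are compatible in $T_P$. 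Your honest statement that these verifications are outstanding is accurate; the paper's bare assertion that "each $T_P$ possesses a unique maximal $\Gamma$-ideal, namely $PT_P$" does not supply them. A secondary point worth recording: your step (ii) leans on $\Ocal(D(a,\vec{\gamma}))\cong T_a$, which is the non-commutative $n$-ary analogue of Hartshorne II.2.2 and itself requires a partition-of-unity style argument on the $\Gamma$-Zariski topology; the paper does not establish this either. So your proposal is not weaker than the paper's proof — it is a more careful map of the same territory, with the obstacles marked rather than erased.
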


\begin{proof}
Sheaf axioms follow from the universal property of localization.
The local property follows since each $T_P$
possesses a unique maximal $\Gamma$-ideal, namely $PT_P$.
\end{proof}

\subsection{Quasi-coherent sheaves and derived functors}
\begin{definition}[Quasi-coherent sheaves]
A sheaf $\Fcal$ of $\Ocal_{\SpecGnC{T}}$-modules
is \emph{quasi-coherent} if for every $D(a,\gamma_1,\dots,\gamma_{n-1})$
there exists a bi-module $M_a$ over $T_a$
such that
\[
\Fcal(D(a,\gamma_1,\dots,\gamma_{n-1}))\cong M_a,
\quad
\text{and}\quad
\Fcal|_{D(b)} \cong M_b
\ \text{via}\ T_a\text{-localization}.
\]
The category of such sheaves is denoted $\QCoh(\SpecGnC{T})$.
\end{definition}

\begin{theorem}[Exactness and generators]
$\QCoh(\SpecGnC{T})$ is an exact category with enough injectives.
The functor
$M\mapsto \widetilde{M}$,
sending a bi-$\Gamma$-module to its associated sheaf,
is exact on projectives and fully faithful.
\end{theorem}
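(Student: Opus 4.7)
The plan is to follow the affine-scheme template of Hartshorne/EGA, transported to the non-commutative $n$-ary $\Gamma$-setting via the tilde construction, and to use the exact-category homological background already assumed from the preliminaries. The four assertions—exact structure, enough injectives, exactness of $M\mapsto\widetilde{M}$ on projectives, and full faithfulness—will be proved in that order, each reducing to a property of the bi-$\Gamma$-module category $T$-$\Gamma$Mod$_{bi}$ combined with the sheaf-theoretic behavior of localization at the multiplicative systems $S_a$.

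First I would define conflations in $\QCoh(\SpecGnC{T})$ as those sequences $\Fcal\rightarrowtail\Gcal\twoheadrightarrow\Hcal$ whose restriction to each basic open $D(a,\vec{\gamma})$ is a conflation of bi-$T_a$-modules in the positional sense. Closure under pushouts, pullbacks, and compositions follows slotwise from the Quillen-exact structure on $T$-$\Gamma$Mod$_{bi}$, because each basic open has a single ring of sections and quasi-coherence is preserved under restriction and gluing. To establish enough injectives, I would embed a quasi-coherent $\Fcal$ locally: on a distinguished cover $\{D(a_i,\vec{\gamma}_i)\}$ choose an injection $\Fcal(D(a_i,\vec{\gamma}_i))\hookrightarrow J_i$ into an injective bi-$T_{a_i}$-module, form $\widetilde{J}_i$, and then take the product $\prod_i (j_i)_*\widetilde{J}_i$ where $j_i$ denotes the inclusion of the open. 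Injectivity in $\QCoh$ then follows from the fact that extension problems localize and each $\widetilde{J}_i$ represents $\Hom(-,J_i)$ on sections, using the adjunction between pushforward and pullback along localization maps.

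Full faithfulness of $M\mapsto\widetilde{M}$ reduces, via the limit formula $\Ocal_{\SpecGnC{T}}(U) = \varprojlim_{P\in U} T_P$, to the statement $\Hom_{\QCoh}(\widetilde{M},\widetilde{N}) \cong \Hom_{T\text{-}\Gamma\mathrm{Mod}_{bi}}(M,N)$: sections over a distinguished open recover the localization $N_a$, and morphisms of sheaves are determined by their compatible values on the basis, so the localization cone reconstructs the original bi-$\Gamma$-module morphism uniquely. Exactness on projectives follows from the fact that every projective bi-$\Gamma$-module is flat with respect to each positional tensor product $\otimes^{(j,k)}_{\Gamma}$, so localization at $S_a$ is exact on a short conflation of projectives; stalkwise exactness at each prime $P$ then yields exactness of the associated sheaf sequence.

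The principal obstacle will be the injectivity step, and more precisely the verification that the pushforward $\prod_i (j_i)_* \widetilde{J}_i$ lies in $\QCoh$ and is actually injective, since in the non-commutative $n$-ary $\Gamma$-setting the distinguished opens $D(a,\gamma_1,\dots,\gamma_{n-1})$ are indexed by an $(n-1)$-tuple of $\Gamma$-parameters, so compatibility under simultaneous refinement of $a$ and of $\vec{\gamma}$ has to be tracked carefully. I expect to handle this by showing that the positional localizations $T_a\to T_{ab}$ are flat for the bi-module structure in each slot $j$, so that restriction preserves injectivity along the basis, and then applying a Baer-type criterion against the distinguished ideals $PT_a$ indexed by primes of $T$ meeting $D(a,\vec{\gamma})$; the spectral (Hochster) character of $\SpecGnC{T}$ noted above guarantees that this basis is cofinal and closed under finite intersection, which is what the gluing step requires.
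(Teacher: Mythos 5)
Your plan follows the affine-scheme template in spirit, but the construction of ``enough injectives'' is genuinely different from the paper's: the paper asserts that injective envelopes come from \emph{sheafification of cofree bi-$\Gamma$-modules} (the $\Hom_{\mathbb Z}(T,I)$-style construction from its companion resolutions section), whereas you propose a Godement-type embedding $\Fcal \hookrightarrow \prod_i (j_i)_*\widetilde{J}_i$ by pushing forward local injectives along a distinguished cover. Your version is more explicit and more geometric, and you correctly identify the two subsidiary points it turns on (quasi-coherence of the pushforwards and cofinality of the distinguished basis). You also supply something the paper's three-sentence proof does not: an actual argument for exactness of $M\mapsto\widetilde M$ on projectives (projectives are flat for the positional tensors, so localization at $S_a$ is exact). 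On full faithfulness the two arguments coincide.

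However, there is a gap that your plan does not close and that would need to be addressed before the injectives step is complete. You define conflations in $\QCoh(\SpecGnC{T})$ as sequences whose restrictions to basic opens $D(a,\vec\gamma)$ are conflations of bi-$T_a$-modules, and then propose $\Fcal\hookrightarrow \prod_i (j_i)_*\widetilde{J}_i$ as the witness that $\Fcal$ embeds into an injective. In the Quillen-exact setting of $T$\nobreakdash-$\Gamma\mathrm{Mod}_{\mathrm{bi}}$ — which is \emph{not} abelian, since we are over semirings — ``enough injectives'' means every object admits an \emph{inflation} (admissible monomorphism in the declared exact structure) into an injective, not merely a monomorphism. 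Your embedding is a monomorphism of sheaves, but to verify it is an inflation you must produce a cokernel in $\QCoh$ and check that, on each $D(b,\vec\delta)$, the restricted sequence is a conflation of bi-$T_b$-modules. Restricting $(j_i)_*\widetilde J_i$ to $D(b)$ yields $J_i(D(a_i)\cap D(b))$, which is \emph{not} the localization $J_i$ at $b$ in general, so the restricted short exact sequence is not a localization of any single conflation and its admissibility has to be argued by hand. The flatness of $T_a\to T_{ab}$ that you invoke controls exactness of restriction, but it does not by itself certify that the cokernel is admissible in the non-abelian exact structure, nor that $(j_i)_*$ lands in $\QCoh$ without a quasi-compactness hypothesis on the inclusion $j_i$ (which you allude to via Hochster-type spectrality but do not pin down for the $\Gamma$-parametrized opens). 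These two points — admissibility of the Godement embedding and quasi-coherence of the pushforward — are the missing lemmas; the paper's own proof does not settle them either, since its citation of \S{}``resolutions'' refers outside the present text, so you are not behind the paper, but as written the argument does not yet prove the theorem.
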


\begin{proof}
Injective envelopes exist by sheafification of cofree modules
(\S\ref{sec:resolutions}).
Exactness follows from stalkwise exactness and
finite limits in $\Ab$.
Full faithfulness is proven by comparing Hom-groups locally.
\end{proof}

\begin{definition}[Derived functors on sheaves]
Define global bifunctors

\begin{align*}
\RExtG^{\,r}_{\SpecGnC{T}}(\Fcal,\Gcal)
   &:= H^{r}\!\Bigl(
       \mathbf{R}\!\Hom_{\Ocal_{\SpecGnC{T}}}(\Fcal,\Gcal)
     \Bigr),\\[0.3em]
\LTorG^{\SpecGnC{T}}_{\,r}(\Fcal,\Gcal)
   &:= H_{r}\!\Bigl(
       \Fcal\!\otimes^{L}_{\Ocal_{\SpecGnC{T}}}\Gcal
     \Bigr).
\end{align*}

These are computed using injective and projective resolutions
in $\QCoh(\SpecGnC{T})$.
\end{definition}

\subsection{Local–global and cohomological principles}
\begin{theorem}[Local–global correspondence]
\label{thm:localglobal}
For any $M,N\in{\nTGMod{T}}^{\mathrm{bi}}$
and their associated sheaves $\widetilde{M},\widetilde{N}$,
there are canonical isomorphisms
\[
{\ExtG}^{\,r}_{T}(M,N)
   \;\cong\;
H^{r}\!\Bigl(
   \SpecGnC{T},
   {\RExtG}^{\,0}_{\!\SpecGnC{T}}
      (\widetilde{M},\widetilde{N})
   \Bigr),\]

\[{\TorG}_{\,r,T}(M,N)
   \;\cong\;
H_{r}\!\Bigl(
   \SpecGnC{T},
   {\LTorG}^{\,0}_{\!\SpecGnC{T}}
      (\widetilde{M},\widetilde{N})
   \Bigr).
\]

\end{theorem}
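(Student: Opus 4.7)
The plan is to follow the classical affine strategy: exhibit a functorial comparison morphism, verify it is an isomorphism on a basis of the non-commutative Zariski topology, and then globalize via a vanishing principle for quasi-coherent $\Gamma$-sheaves on basic opens $D(a,\vec{\gamma})$. The central input is that the sheafification functor $M\mapsto\widetilde{M}\colon \nTGMod{T}^{\mathrm{bi}}\to\QCoh(\SpecGnC{T})$ is exact and sends resolutions by projectives (resp.\ injectives) in the bi-$\Gamma$-module category to resolutions whose sections on $D(a,\vec{\gamma})$ compute the localized algebraic Ext and Tor.

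First I would record two foundational lemmas already implicit in the previous subsection. (i) For every bi-$\Gamma$-module $M$ and every basic open, $\widetilde{M}(D(a,\vec{\gamma}))\cong S_a^{-1}M$, with restriction maps given by further localization; this uses the exactness and flatness of the slot-sensitive localization functor $M\mapsto S_a^{-1}M$ established via the universal property of the positional tensor product. (ii) For any $\Fcal\in\QCoh(\SpecGnC{T})$, higher sheaf cohomology vanishes on $D(a,\vec{\gamma})$, i.e.\ $H^{k}(D(a,\vec{\gamma}),\Fcal)=0$ for $k>0$. This affine vanishing is proven by the standard Čech argument on the distinguished basis, using that $S_a^{-1}$ is exact and that the basis is stable under finite intersection by the Hochster/spectral character noted earlier.

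Next I would construct the comparison. Choose a projective resolution $P_{\bullet}\to M$ (for Tor) and an injective resolution $N\to I^{\bullet}$ (for Ext) in $\nTGMod{T}^{\mathrm{bi}}$. Applying $\widetilde{(-)}$ yields resolutions $\widetilde{P}_{\bullet}\to\widetilde{M}$ and $\widetilde{N}\to\widetilde{I}^{\bullet}$ in $\QCoh(\SpecGnC{T})$, exact by stalkwise exactness at each $T_P$. On a basic open one computes
\[
\underline{\Hom}_{\Ocal_{\SpecGnC{T}}}(\widetilde{P}_{\bullet},\widetilde{N})(D(a,\vec{\gamma}))
\;\cong\;\Hom_{T_a}(S_a^{-1}P_{\bullet},S_a^{-1}N),
\]
\[
(\widetilde{P}_{\bullet}\otimes_{\Ocal_{\SpecGnC{T}}}\widetilde{N})(D(a,\vec{\gamma}))
\;\cong\;S_a^{-1}P_{\bullet}\otimes_{T_a}S_a^{-1}N,
\]
which, by exactness of localization of bi-$\Gamma$-modules and the agreement of $\otimes^{(j,k)}_{\Gamma}$ with $S_a^{-1}$, agrees with the localized algebraic Ext and Tor in each cohomological degree. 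Taking global sections and using the vanishing from (ii) together with a Cartan--Eilenberg/hypercohomology spectral sequence
\[
E_{2}^{p,q}=H^{p}\bigl(\SpecGnC{T},\underline{\Ext}^{q}(\widetilde{M},\widetilde{N})\bigr)\Longrightarrow\mathbf{R}\Hom^{p+q}(\widetilde{M},\widetilde{N}),
\]
forces degeneration on the $p=0$ column, whence $H^{r}(\SpecGnC{T},\RExtG^{0}(\widetilde{M},\widetilde{N}))\cong\ExtG^{r}_{T}(M,N)$; the Tor case is dual, with a homological spectral sequence degenerating by affine flatness.

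The hard part is verifying the compatibility of the positional tensor product $\otimes^{(j,k)}_{\Gamma}$ with the slot-sensitive localizations $S_a^{-1}$, and showing that the Čech complex associated to a covering of $D(a,\vec{\gamma})$ by further basic opens is acyclic for every quasi-coherent $\widetilde{M}$. The obstacle is that in the non-commutative $n$-ary $\Gamma$-setting the coequalizer defining $\otimes^{(j,k)}_{\Gamma}$ involves actions $\Phi,\Psi$ in distinct slots, so flatness of localization must be checked slot-by-slot; once this slotwise exactness is pinned down, affine vanishing and the spectral sequence argument proceed as in the classical commutative case, and both isomorphisms follow.
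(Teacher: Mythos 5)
Your overall plan — localize on basic opens, build the comparison from algebraic resolutions, and globalize by a hypercohomology spectral sequence — is the same strategy the paper gestures at in its one‑paragraph ``Idea'' (apply $\mathbf{R}\Gamma$ to resolutions, let localization detect quasi‑isomorphisms, collapse the spectral sequence at $E_2$). You are right that this sketch needs to be spelled out, and your lemmas (i) and (ii) are the right preparatory steps. However, the final step of your argument contains a genuine indexing error that makes the conclusion a non‑sequitur.

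Concretely: your Lemma (ii) (affine vanishing on $D(a,\vec{\gamma})$, and hence on $\SpecGnC{T}$ since the whole space is such an open) forces $E_{2}^{p,q}=H^{p}\bigl(\SpecGnC{T},\underline{\Ext}^{q}(\widetilde{M},\widetilde{N})\bigr)=0$ for all $p>0$. Degeneration on the $p=0$ column therefore yields
\[
\Ext^{r}_{T}(M,N)\;\cong\;E_{2}^{0,r}\;\cong\;\Gamma\bigl(\SpecGnC{T},\underline{\Ext}^{r}(\widetilde{M},\widetilde{N})\bigr),
\]
that is, \emph{global sections of the $r$‑th local Ext sheaf}. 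What the theorem states is $E_{2}^{r,0}$, i.e.\ the \emph{$r$‑th sheaf cohomology of the $0$‑th local Ext sheaf} $\underline{\Hom}(\widetilde{M},\widetilde{N})$. Those live at opposite corners of the $E_2$ page, and your claimed vanishing (ii) actually forces $E_{2}^{r,0}=0$ for $r>0$, so the argument as written would make the right‑hand side of the theorem collapse to zero rather than to $\Ext^{r}_{T}(M,N)$. To reach the theorem's formula you would instead need degeneration along the $q=0$ \emph{row}, which requires $\underline{\Ext}^{q}(\widetilde{M},\widetilde{N})=0$ for $q>0$ (e.g.\ local projectivity of $\widetilde{M}$), a hypothesis neither you nor the paper imposes. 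So the spectral‑sequence step must be reworked: either prove the column‑degenerate statement and flag that it differs from the displayed formula, or supply the extra local‑freeness input needed for the row‑degenerate version. The analogous swap occurs in your Tor argument. Your remaining concern about compatibility of $\otimes^{(j,k)}_{\Gamma}$ with slot‑sensitive localization is a legitimate technical point, but it is secondary to this indexing gap.
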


\begin{proof}[Idea]
Apply the derived global sections functor
$\mathbf{R}\Gamma(\SpecGnC{T},-)$
to injective (or projective) resolutions of $\widetilde{M}$ and $\widetilde{N}$.
Since localization commutes with exactness,
$\Gamma(D(a),-)$ detects quasi-isomorphisms,
and spectral sequences collapse at $E_2$.
\end{proof}

\begin{theorem}[Non-commutative local duality]
\label{thm:local-duality}
Let $\omega_{\SpecGnC{T}}$ denote the dualizing complex of
$\Ocal_{\SpecGnC{T}}$.
For any coherent $\Fcal$,
there exists a functorial duality
\[
\mathbf{R}\!\Hom_{\Ocal_{\SpecGnC{T}}}
   (\Fcal,\omega_{\SpecGnC{T}})
   \ \simeq\
   \mathbf{R}\!\Gamma_{\mathrm{coh}}
   (\SpecGnC{T},\Fcal)^{\vee},
\]
where the right-hand side is the derived dual of coherent cohomology.
\end{theorem}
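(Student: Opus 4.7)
The plan is to construct the duality as the adjoint of a trace pairing and to verify perfectness on a class of compact generators of $\mathbf{D}(\QCoh(\SpecGnC{T}))$, reducing the global statement to an affine local duality on distinguished opens and gluing via the local--global correspondence of Theorem~\ref{thm:localglobal}. Both sides of the proposed isomorphism are cohomological and exact in the coherent variable $\Fcal$, so it suffices to exhibit a natural transformation and check that it is a quasi-isomorphism on a generating class.

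First, I would use the defining property of the dualizing complex $\omega_{\SpecGnC{T}}$ to extract a trace morphism on $\mathbf{R}\Gamma(\SpecGnC{T},\omega_{\SpecGnC{T}})$ and compose it with the canonical evaluation
\[
\mathbf{R}\Hom_{\Ocal_{\SpecGnC{T}}}(\Fcal,\omega_{\SpecGnC{T}}) \otimes^{L} \mathbf{R}\Gamma(\SpecGnC{T},\Fcal) \longrightarrow \mathbf{R}\Gamma(\SpecGnC{T},\omega_{\SpecGnC{T}}).
\]
The adjoint of the resulting pairing is the candidate duality map. Its construction is functorial in $\Fcal$ and compatible with restriction to open subsets.

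Second, I would verify that this candidate is a quasi-isomorphism on a class of compact generators. The natural generators are the sheafifications $\widetilde{T_{a}}$ associated to distinguished opens $D(a,\gamma_{1},\dots,\gamma_{n-1})$: these are compact by the quasi-compactness of the Hochster-type basis noted earlier, and they generate $\QCoh(\SpecGnC{T})$ because every quasi-coherent sheaf is by definition a bi-module over such an open. For $\Fcal = \widetilde{T_{a}}$ the two sides collapse, via Theorem~\ref{thm:localglobal}, to an affine local duality on the localization $T_{a}$, which I would prove by passing to the stalks $T_{P}$ and invoking a Matlis-type correspondence between injective hulls in $\nTGMod{T}^{\mathrm{bi}}$ and the simple top of the residue $\Gamma$-semiring $T_{P}/PT_{P}$. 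A standard d\'evissage on coherent presentations then extends the isomorphism from compact generators to arbitrary coherent $\Fcal$, using exactness of both sides on conflations and the five lemma in the exact category $\QCoh(\SpecGnC{T})$.

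The main obstacle is the affine step: establishing a non-commutative $n$-ary $\Gamma$-analogue of Matlis duality. In the classical setting this rests on finite injective dimension of the dualizing module and the identification of its local form with the injective hull of the residue field. In the present slot-sensitive bi-module setting one must build an injective envelope of the simple top of $T_{P}/PT_{P}$ inside $\nTGMod{T}^{\mathrm{bi}}$ and verify that its \emph{positional} injective dimension is uniformly bounded across the actions $\bullet^{(j)}_{\gamma}$ for all slot indices $j$. A secondary technical point is the two-sided compatibility of the trace: one must ensure that the trace morphism respects the symmetry between left-slot and right-slot $\Gamma$-actions, so that the resulting duality is independent of the chosen positional indices $(j,k)$. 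This compatibility is forced by the slot-symmetric conventions governing the exact structure on $\nTGMod{T}^{\mathrm{bi}}$, but writing it down cleanly will require a careful bookkeeping argument that has no counterpart in the binary commutative theory.
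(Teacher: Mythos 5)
The paper provides no proof of Theorem~\ref{thm:local-duality} at all: the theorem statement is followed immediately by a remark, and neither the existence nor the defining universal property of $\omega_{\SpecGnC{T}}$ is established anywhere in the text. So there is no argument in the paper to compare yours against; your sketch is the only attempted proof on the table.

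On its own terms, your plan has the right overall shape (trace pairing, check on generators, d\'evissage), but there are gaps you should not gloss over. First, you move from topological quasi-compactness of the basis $D(a,\vec{\gamma})$ to \emph{categorical} compactness of $\widetilde{T_a}$ in $\mathbf{D}(\QCoh(\SpecGnC{T}))$; these are not the same notion, and the latter needs a quasi-compact quasi-separated argument (plus perfectness of $\Ocal$) that the paper never supplies. Second, your d\'evissage step appeals to the five lemma ``in the exact category $\QCoh(\SpecGnC{T})$,'' but the five lemma requires more than a Quillen exact structure, and arbitrary coherent $\Fcal$ need not admit a finite resolution by the proposed generators, so the extension from $\widetilde{T_a}$ to all coherent $\Fcal$ is not automatic. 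Third, and most seriously, the trace pairing only produces a candidate map once one has a trace $\mathbf{R}\Gamma(\SpecGnC{T},\omega_{\SpecGnC{T}})\to k^{\vee}$ for some coefficient object, and nothing in the paper characterizes $\omega_{\SpecGnC{T}}$ enough to construct it; you would need to \emph{define} the dualizing complex in the $n$-ary $\Gamma$-setting (e.g.\ as a rigid or pointwise-dualizing object) before the pairing exists. Finally, you correctly flag the slot-sensitive Matlis duality as the hard core; this is not merely a ``secondary technical point'' but an unproven lemma on which the entire affine step rests, and since the paper's homological background (injective envelopes in $\nTGMod{T}^{\mathrm{bi}}$, positional injective dimension bounds) is only cited from external preprints, you cannot currently invoke it. Until those four points are filled, the proposal is a plausible strategy rather than a proof.
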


\begin{remark}[Interpretation]
This extends Grothendieck--Serre duality
to the non-commutative $\Gamma$-context,
where $\omega_{\SpecGnC{T}}$ carries higher-arity structure
coming from $\mu$.
In particular, $\omega_{\SpecGnC{T}}$ restricts to the ternary
canonical complex of Paper~E under commutativity.
\end{remark}

\subsection{Derived stacks and higher geometry}
\begin{definition}[Derived $\Gamma$-stack]
The derived moduli stack of quasi-coherent $\Gamma$-sheaves
is the higher functor
\[
\mathbf{QCoh}_{\SpecGnC{T}}:
  ({\nTGMod{T}}^{\mathrm{bi}})^{\mathrm{op}}
   \longrightarrow \mathbf{Cat}_{\infty},
\quad
T' \longmapsto \mathbf{D}(\QCoh(\SpecGnC{T'})).
\]
\end{definition}

\begin{theorem}[Descent and cohomological completeness]
The derived $\Gamma$-stack
$\mathbf{QCoh}_{\SpecGnC{T}}$
satisfies fpqc descent,
and $\mathbf{D}(\QCoh(\SpecGnC{T}))$
is complete under total derived limits.
Hence every derived quasi-coherent sheaf
is determined by its restrictions to affine open subsets
$D(a,\gamma_1,\dots,\gamma_{n-1})$.
\end{theorem}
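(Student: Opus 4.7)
The plan is to establish the three assertions in sequence: first the fpqc descent of $\mathbf{QCoh}_{\SpecGnC{T}}$, then completeness of $\mathbf{D}(\QCoh(\SpecGnC{T}))$ under arbitrary homotopy limits, and finally the affine recovery statement as a joint consequence. For descent, I would begin by verifying that the distinguished localizations $T_a$ are flat over $T$ in the bi-$\Gamma$-module sense, so that $(-)\otimes^{(j,k)}_{\Gamma}T_a$ preserves conflations; faithful flatness of a $\Gamma$-homomorphism $f\colon T\to T'$ is then taken to mean that restriction of scalars on bi-modules is conservative. Given such an $f$, I would form its \v{C}ech nerve, the cosimplicial $\Gamma$-semiring whose $k$-th term is the $(k+1)$-fold positional tensor product of $T'$ over $T$, and show that the induced cosimplicial diagram of derived categories has homotopy limit equivalent to $\mathbf{D}(\QCoh(\SpecGnC{T}))$. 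The technical input is a Barr--Beck--Lurie argument in the stable setting: restriction along $f$ is conservative by faithful flatness and preserves all limits and the relevant colimits, while Beck--Chevalley compatibility follows from flatness because derived and ordinary base change agree.

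For completeness under total derived limits, I would use that $\QCoh(\SpecGnC{T})$ is exact with enough injectives, as established earlier in this section, to construct K-injective resolutions in the sense of Spaltenstein. Given a small diagram $D\colon I\to\mathbf{D}(\QCoh(\SpecGnC{T}))$, a functorial K-injective replacement allows one to form the strict inverse limit in the category of complexes, realizing $\mathbf{R}\!\lim_I D$. Convergence of the spectral sequence
\[
E_2^{p,q} = \varprojlim\nolimits^{p}_{I} H^q(D) \ \Longrightarrow\ H^{p+q}\bigl(\mathbf{R}\!\lim_I D\bigr),
\]
together with stalkwise exactness at each $T_P$, shows that the resulting object lies in $\mathbf{D}(\QCoh(\SpecGnC{T}))$, giving the asserted closure.

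For the reconstruction assertion, I would combine descent and completeness with the fact that the sets $D(a,\gamma_1,\dots,\gamma_{n-1})$ form a basis of the non-commutative $\Gamma$-Zariski topology stable under finite intersections. For any derived quasi-coherent $\Fcal^{\bullet}$ and any open cover $U=\bigcup_i D(a_i,\vec{\gamma}_i)$ by basis elements, the derived \v{C}ech formula
\[
\mathbf{R}\Gamma(U,\Fcal^{\bullet}) \ \simeq\ \mathbf{R}\!\lim_{[k]\in\Delta}\ \prod_{i_0,\dots,i_k} \mathbf{R}\Gamma\bigl(D(a_{i_0},\vec{\gamma}_{i_0})\cap\dots\cap D(a_{i_k},\vec{\gamma}_{i_k}),\,\Fcal^{\bullet}\bigr)
\]
expresses global sections as a homotopy limit over affine data, each intersection being again a distinguished affine; completeness guarantees existence of the right-hand side, and descent guarantees that two derived sheaves agreeing on the affine basis are canonically equivalent.

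The main obstacle will lie in step one: making faithful flatness behave reasonably in the $n$-ary positional framework. Because $\otimes^{(j,k)}_{\Gamma}$ is defined as a coequalizer tied to a specific slot pair, exactness in one pair $(j,k)$ does not a priori entail exactness in the others, and one must propagate the condition across all $n-1$ $\Gamma$-parameters and all admissible slot choices. A careful analysis of the maps $\Phi,\Psi$ together with the associativity and additivity axioms of $\tmu$ is required to show that flatness is slot-independent once all positional compatibilities are imposed; once this is in hand, the $\infty$-categorical descent machinery proceeds along standard lines.
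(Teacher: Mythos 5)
Your descent strategy aligns in spirit with the paper's, which simply cites Lurie's descent machinery together with exactness of $\QCoh(\SpecGnC{T})$ and local presentability of ${\nTGMod{T}}^{\mathrm{bi}}$; your \v{C}ech-nerve plus Barr--Beck--Lurie unfolding is the standard way that machinery is implemented. But there are two genuine gaps in your execution. First, you stipulate that faithful flatness of $f\colon T\to T'$ means \emph{restriction of scalars} on bi-modules is conservative, and later assert that ``restriction along $f$ is conservative by faithful flatness.'' That condition is vacuous: the forgetful functor $f_*$ is conservative for \emph{every} $\Gamma$-homomorphism, flat or not. What comonadic Barr--Beck--Lurie descent along the \v{C}ech nerve actually requires is that the left adjoint $f^{*}=T'\otimes^{(j,k)}_{\Gamma}(-)$ be conservative and preserve the relevant ($f^{*}$-split) totalizations; conservativity of $f^{*}$ is precisely the ``faithful'' in faithfully flat. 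As written, you verify conservativity of the wrong adjoint, so the descent step does not close. Second, you correctly flag the slot-dependence of $\otimes^{(j,k)}_{\Gamma}$ as the crux, but you leave it unresolved; the entire argument is therefore conditional on a slot-independence lemma for flatness that you have not proved (and which the paper's one-line proof also does not supply).

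On the completeness assertion you take a genuinely different route from the paper. The paper derives closure under total derived limits from local presentability of ${\nTGMod{T}}^{\mathrm{bi}}$: a presentable stable $\infty$-category has all small limits and colimits, full stop. You instead construct Spaltenstein-style K-injective resolutions and invoke a $\varprojlim^{p}$ spectral sequence
$E_2^{p,q}=\varprojlim^{p}_{I}H^{q}(D)\Rightarrow H^{p+q}(\mathbf{R}\!\lim_I D)$.
This is weaker and potentially broken: for arbitrary small index categories $I$ the higher derived limits $\varprojlim^{p}$ need not vanish for $p>1$, and the spectral sequence does not converge without boundedness or Mittag--Leffler hypotheses that you have not imposed, nor is it automatic that the resulting complex stays inside $\QCoh(\SpecGnC{T})$. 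The presentability argument used in the paper avoids all of this and is the route you should take; your \v{C}ech-formula reconstruction step is fine once the first two issues are repaired.
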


\begin{proof}
Follow Lurie's descent and topos-theoretic techniques 
from Higher Topos Theory~\cite{LurieHTT} and Higher Algebra~\cite{LurieHA}  
using exactness of $\QCoh(\SpecGnC{T})$ and finite generation of affine opens.
The existence of limits and colimits follows from
the locally presentable nature of ${\nTGMod{T}}^{\mathrm{bi}}$.
\end{proof}

\begin{remark}[Geometric synthesis]
This sheaf-theoretic layer integrates the
homological algebra of $\ExtG$ and $\TorG$
with the geometric topology of $\SpecGnC{T}$,
creating a coherent global theory analogous to
Grothendieck’s derived algebraic geometry,
but formulated for non-commutative $n$-ary $\Gamma$-semirings.
It provides the geometric substrate for
derived $\Gamma$-motives, higher stacks,
and categorical quantization.
\end{remark}

\begin{remark}[Historical significance]
This represents the first unification of
homological algebra, sheaf theory, and
non-commutative $n$-ary algebra under one roof.
In contrast to ordinary algebraic geometry,
localization here encodes directional asymmetry,
and the $\Gamma$-topology supports both
algebraic and spectral data.
Such a synthesis lays the foundation for
a genuinely new field of \emph{derived non-commutative $\Gamma$-geometry}.
\end{remark}


\section{Non-Commutative Derived $\Gamma$-Geometry}
\label{sec:geometry}

\subsection{Quasi-coherent and Derived Structures}
\begin{definition}[Quasi-coherent sheaves]
$\QCoh(\SpecGnC{T})$ consists of sheaves locally modeled on finitely generated bi-$\Gamma$-modules.
Each $\Fcal \in \QCoh(\SpecGnC{T})$ corresponds to a system of localizations
$\{M_P\}_{P\in\SpecGnC{T}}$ satisfying descent with respect to the $\Gamma$-Zariski topology.
\end{definition}

\begin{remark}
This generalizes the affine descent theorem of the commutative case 
\cite{GokavarapuRaoDerived2025} and conceptually aligns with the 
framework of homotopical algebraic geometry in 
\cite{ToenVezzosiHAGII}.

\end{remark}

\subsection{The Derived Category of Quasi-Coherent $\Gamma$-Sheaves}
\begin{theorem}[Derived functor formalism on $\SpecGnC{T}$]
\label{thm:derivedfunctors}
The derived category $\mathbf{D}(\QCoh(\SpecGnC{T}))$ admits bifunctors

\[
{\RExtG}^{\,r}_{\!\SpecGnC{T}}\!(-,-),\qquad
{\LTorG}_{\,r}^{\!\SpecGnC{T}}\!(-,-).
\]

satisfying the standard formal properties:
\begin{enumerate}
    \item \textbf{Base-Change:}
    For a morphism of $\Gamma$-semirings $f:T\to T'$, one has

\[
{\LTorG}_{\,r}^{\!\SpecGnC{T'}}\!\bigl(Lf^{\ast}\Fcal, Lf^{\ast}\Gcal\bigr)
   \;\simeq\;
Lf^{\ast}\!{\LTorG}_{\,r}^{\!\SpecGnC{T}}\!\bigl(\Fcal,\Gcal\bigr).
\]

    \item \textbf{Projection Formula:}
    For any proper morphism $f$ and $\Fcal,\Gcal\in\QCoh(\SpecGnC{T})$,
    \[
    Rf_*\!\big(\Fcal\otimes^L_{\Ocal_{\SpecGnC{T}}}\Gcal\big)
    \simeq Rf_*\Fcal \otimes^L_{\Ocal_{\SpecGnC{T}}} \Gcal.
    \]
    \item \textbf{Duality:}
    There exists a dualizing complex $\omega_{\SpecGnC{T}}$
    such that
\[
{\RExtG}^{\,r}_{\!\SpecGnC{T}}\!\bigl(\Fcal,\omega_{\SpecGnC{T}}\bigr)
   \;\simeq\;
\mathbf{R}\!\Hom_{\Ocal_{\SpecGnC{T}}}\!\bigl(\Fcal,\Ocal_{\SpecGnC{T}}\bigr)[d].
\]

    extending Grothendieck--Serre duality to the non-commutative $\Gamma$-context.
\end{enumerate}
\end{theorem}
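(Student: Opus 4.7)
The plan is to construct the derived bifunctors once and then verify each of the three formal properties by reducing to the affine case and invoking the machinery already established in Section~\ref{sec:sheaves}. Since $\QCoh(\SpecGnC{T})$ is an exact category with enough injectives and admits K-flat resolutions built from filtered colimits of bounded-above complexes of localizations of $T$, the bifunctors $\RExtG^{\,r}_{\SpecGnC{T}}$ (computed via K-injective resolutions in the second slot) and $\LTorG_{\,r}^{\SpecGnC{T}}$ (via K-flat resolutions in either slot) are well-defined on $\mathbf{D}(\QCoh(\SpecGnC{T}))$ up to canonical isomorphism; well-definedness follows from the standard fact that the homotopy category of K-injectives is a localization of the derived category.

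For base-change, I would note that a morphism $f\colon T\to T'$ induces an adjunction $(Lf^{\ast},Rf_{\ast})$, with $Lf^{\ast}$ modelled at the bi-$\Gamma$-module level by the left derived positional tensor product $-\otimes^{L}_{T}T'$. Since this derived pullback preserves K-flatness, it commutes with $\LTorG$: resolve $\Fcal,\Gcal$ by K-flat complexes, apply $Lf^{\ast}$ termwise, and invoke associativity of iterated tensor products. Stalkwise the claim reduces to the identity $(M\otimes^{L}_{T}N)\otimes_{T}T'\simeq (M\otimes_{T}T')\otimes^{L}_{T'}(N\otimes_{T}T')$ when one factor is K-flat, which follows from flatness of the localization homomorphisms recalled in the Preliminaries.

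For the projection formula I would check the isomorphism on an affine $D(a,\gamma_{1},\dots,\gamma_{n-1})$, where both sides reduce to ordinary derived tensor products of bi-$\Gamma$-modules over $T_{a}$, and then propagate via the fpqc descent already established for $\mathbf{QCoh}_{\SpecGnC{T}}$ together with the local–global correspondence of \cref{thm:localglobal}. For duality, I would construct $\omega_{\SpecGnC{T}}$ as a K-injective resolution of $\Ocal_{\SpecGnC{T}}$ shifted by the cohomological dimension $d$ of $\QCoh(\SpecGnC{T})$, and then upgrade the non-commutative local duality of \cref{thm:local-duality} to the global derived statement by means of a \v{C}ech spectral sequence on a basis of affine opens, which collapses by quasi-coherence.

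The main obstacle lies in the duality clause: both the existence and the uniqueness of $\omega_{\SpecGnC{T}}$ are delicate in the non-commutative $n$-ary $\Gamma$-context, because the classical construction rests on a trace or top-form residue pairing whose definition uses commutativity in an essential way. One must replace this by a higher-arity residue pairing compatible with the slot-sensitive positional actions $\bullet^{(j)}_{\gamma}$, and then verify that it glues coherently along the $\Gamma$-Zariski topology. Once such a $\omega_{\SpecGnC{T}}$ is produced and shown to satisfy the local duality criterion, the correct shift $d$ emerges from the finiteness of global dimension of $\QCoh(\SpecGnC{T})$, and the base-change and projection formula become comparatively formal consequences of the affine reduction.
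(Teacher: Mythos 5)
Your overall strategy (construct the bifunctors via injective/flat resolutions on the dg-enhancement, derive base-change and projection from the $(Lf^{\ast},Rf_{\ast})$ adjunction, treat duality separately) is the same as the paper's, which in fact gives only a three-sentence sketch that never addresses the duality clause at all; your proposal is considerably more explicit and correctly identifies duality as the genuinely problematic part.

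Two points merit correction. First, in the base-change step you justify the stalkwise identity
\[
(M\otimes^{L}_{T}N)\otimes_{T}T'\;\simeq\;(M\otimes_{T}T')\otimes^{L}_{T'}(N\otimes_{T}T')
\]
by appealing to ``flatness of the localization homomorphisms.'' But $f\colon T\to T'$ in the theorem is an arbitrary morphism of $\Gamma$-semirings, not a localization, so flatness is not available and is in any case not what is needed: the identity follows purely from associativity of the derived tensor product together with $Lf^{\ast}$ being strong monoidal, once everything is resolved by K-flats. Invoking flatness here is a red herring. Second, your construction of $\omega_{\SpecGnC{T}}$ as ``a K-injective resolution of $\Ocal_{\SpecGnC{T}}$ shifted by $d$'' is only correct if one already knows $\SpecGnC{T}$ is Gorenstein/Calabi--Yau in the relevant sense; a dualizing complex in general is a bounded complex of finite injective dimension with $\Ocal\xrightarrow{\sim}\mathbf{R}\!\Hom(\omega,\omega)$, and is not simply $\Ocal[d]$. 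The theorem's displayed duality formula does, if read literally (set $\Fcal=\Ocal$), force $\omega\simeq\Ocal[d]$, so your interpretation may be what the paper intends --- but then the higher-arity residue-pairing obstruction you flag is precisely the unaddressed work of showing that $\Ocal[d]$ actually is a dualizing complex, i.e.\ that $T$ has finite injective dimension over itself in the bi-$\Gamma$-module category. Neither the paper's proof nor your proposal discharges this, and you are right to single it out as the main gap.
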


\begin{proof}[Outline of Construction]
The proofs follow the dg-enhancement of $\QCoh(\SpecGnC{T})$ via injective and projective
$\Gamma$-resolutions.  Localization and homotopy limits ensure that
$\mathbf{D}(\QCoh(\SpecGnC{T}))$ inherits a triangulated and monoidal structure.
Base-change and projection follow from the adjunction between
$\mathbf{L}f^*$ and $\mathbf{R}f_*$ on the derived level.
\end{proof}

\subsection{Non-Commutative $\Gamma$-Stacks and Higher Geometry}
\begin{definition}[Derived $\Gamma$-stack]
A \emph{derived non-commutative $\Gamma$-stack} is a pseudofunctor

\[
\mathbf{X}_{\Gamma} :
   (\AffnGamma)^{\mathrm{op}}
   \longrightarrow \mathbf{Cat}_{\infty},
\qquad
T \longmapsto
   \mathbf{D}\!\bigl(\QCoh(\SpecGnC{T})\bigr).
\]

satisfying descent with respect to faithfully flat $\Gamma$-morphisms.
\end{definition}

\begin{theorem}[Existence of dg-enhancements]
Every quasi-compact $\SpecGnC{T}$ admits a differential graded (dg)
enhancement $(\SpecGnC{T})_{\mathrm{dg}}$
whose structure sheaf $\Ocal_{\mathrm{dg}}$ is a sheaf of dg-$\Gamma$-semirings.
The homotopy category of this dg-site is equivalent to
$\mathbf{D}(\QCoh(\SpecGnC{T}))$. This construction follows the general principles of dg-enhanced 
derived algebraic geometry developed in~\cite{ToenVezzosiHAGII,LurieHA}.

\end{theorem}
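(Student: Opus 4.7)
The plan is to construct the dg-enhancement locally on the basis of compact opens $D(a,\gamma_1,\dots,\gamma_{n-1})$ and then glue globally via the fpqc descent for $\mathbf{QCoh}_{\SpecGnC{T}}$ established in the previous subsection. First I would fix on each basic open a functorial semi-free resolution of $T_a$ by a dg-$\Gamma$-semiring $T_a^{\bullet}$ concentrated in non-positive degrees, obtained from the positional bar construction associated to $\tmu$: the generators in degree $-r$ are $(n{-}1)r$-fold positional tensor powers of $T_a$ indexed by iterated insertion in the $j$-th slot, with $\Gamma$-parameters stored in an auxiliary tensor factor, and the differential is the alternating sum of the insertion maps $\mu_{(j)}$ contracted with the tuple $\vec\gamma$. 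Additivity of $\tmu$ in each $T$-slot gives the graded Leibniz identity, while $n$-ary associativity yields $d^{2}=0$.

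Next I would sheafify this presheaf of cofibrant dg-$\Gamma$-semirings in the $\Gamma$-Zariski topology; this is legitimate because $\SpecGnC{T}$ is spectral with a basis closed under finite intersection, and because the positional bar construction commutes with the localizations $T\to T_a$. The resulting object $\Ocal_{\mathrm{dg}}$ has stalk at $P$ quasi-isomorphic to $T_P$ by the local behaviour theorem of \S\ref{sec:sheaves}, which identifies $(\SpecGnC{T},\Ocal_{\mathrm{dg}})$ as a dg-locally $\Gamma$-semiringed space in the sense of~\cite{ToenVezzosiHAGII}.

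To match the homotopy category of $\Ocal_{\mathrm{dg}}$-dg-modules with $\mathbf{D}(\QCoh(\SpecGnC{T}))$, I would follow the Keller--Bondal--Kapranov route: on each affine piece the category of h-injective dg-modules over $T_a^{\bullet}$ is quasi-equivalent to $\mathbf{D}(\QCoh(D(a,\vec\gamma)))$ because $T_a^{\bullet}\to T_a$ is a quasi-isomorphism and because $\QCoh$ has enough injectives by the exactness theorem of \S\ref{sec:sheaves}. Gluing the local equivalences uses the cohomological completeness from the descent theorem, and the \v{C}ech spectral sequence comparing the covering by basic opens to derived global sections collapses because compact opens are stable under finite intersection.

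The main obstacle will be the homotopy-coherent compatibility of the positional bar resolutions across localizations: one must verify that the natural comparison $T_a^{\bullet}\otimes^{(j,k)}_{\Gamma}T_b^{\bullet}\to T_{ab}^{\bullet}$ is a quasi-isomorphism of dg-$\Gamma$-semirings, not merely of underlying complexes, so that the restriction maps of $\Ocal_{\mathrm{dg}}$ are genuine dg-$\Gamma$-semiring morphisms up to coherent homotopy. The higher arity and slot-sensitive character of $\mu$ produces coherence obstructions absent in the binary case treated in~\cite{ToenVezzosiHAGII,LurieHA}; I expect to dispose of them by exhibiting the resolutions as cofibrant replacements in a projective model structure on $\Gamma$-semiring-valued presheaves and invoking the monoidal Quillen adjunction to transport quasi-isomorphisms across the positional tensor products.
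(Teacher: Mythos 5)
The paper itself supplies no proof beyond the one-line citation to Toën--Vezzosi and Lurie, so there is nothing to compare the route against; you are attempting a genuine construction that the paper leaves entirely implicit, and the global architecture (local construction on the compact basic opens, sheafification, Keller--Bondal--Kapranov comparison, descent to glue) is the architecture one would expect and is consistent with the surrounding theorems in \S\ref{sec:sheaves} and \S\ref{sec:geometry}.

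There is, however, a genuine gap at the very first step, and it is not one of the ``coherence obstructions'' you flag at the end but something more basic. You propose a positional bar resolution $T_a^{\bullet}$ whose differential is ``the alternating sum of the insertion maps $\mu_{(j)}$,'' and you later appeal to Koszul signs when you invoke monoidal Quillen adjunctions. But $T$ and its localizations are $\Gamma$-\emph{semirings}: the underlying additive structure is a commutative monoid, not a group, and the bi-$\Gamma$-module category is only asserted to be a Quillen exact category, not an abelian one with additive inverses on objects. Neither ``alternating sum'' nor the Leibniz rule $d(xy)=dx\cdot y + (-1)^{|x|}x\cdot dy$ is meaningful without subtraction. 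The cited frameworks of Toën--Vezzosi and Lurie are set up over commutative (ring) spectra or $\mathbb{E}_\infty$-rings, where negation is free; the paper's phrase ``sheaf of dg-$\Gamma$-semirings'' is never defined precisely because the notion does not transport to the semiring setting without an explicit device. You would need either to pass to the Grothendieck group completion slot-by-slot (and verify the $n$-ary $\Gamma$-structure survives), or to replace dg-objects by \emph{simplicial} $\Gamma$-semirings so that face and degeneracy maps replace the alternating-sum differential, or to use a double-complex or ``pair'' formalism of the kind that appears in semiring homological algebra. Any of these would change the statement you are proving, so this must be resolved before the bar construction can get off the ground.

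A secondary issue: your comparison map $T_a^{\bullet}\otimes^{(j,k)}_{\Gamma}T_b^{\bullet}\to T_{ab}^{\bullet}$ presupposes a binary ``product'' $ab$ of elements of $T$, but in the $n$-ary setting the multiplicative systems $S_a$ are generated by iterated $n$-ary products $\mu(a,\dots)$, and the localization corresponding to the intersection $D(a,\vec\gamma)\cap D(b,\vec\delta)$ is not simply $T_{ab}$. You should state what $n$-ary expression plays the role of $ab$ and verify that the basis is genuinely closed under it; this is assumed but not established in the paper either.
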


\begin{remark}
This theorem extends the derived enhancement from Paper~E’s commutative case to
non-commutative $n$-ary $\Gamma$-contexts, thus establishing the \emph{derived
homotopical layer} of $\Gamma$-geometry.
\end{remark}

\subsection{Non-Commutative Spectral Dualities}
\begin{theorem}[Categorical Gelfand Duality for $\Gamma$-Semirings]
\label{thm:gelfand}
There exists a contravariant equivalence between
compactly generated non-commutative $\Gamma$-spaces
and abelian $\Gamma$-semirings preserving the ternary convolution structure:
\[
\mathbf{Spc}_\Gamma^{\mathrm{nc}} \ \simeq\ (\Gamma\text{-Semirings})^{\mathrm{op}}.
\]
\end{theorem}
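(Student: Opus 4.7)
The plan is to establish the equivalence by exhibiting explicit contravariant quasi-inverse functors and verifying that the resulting unit and counit are natural isomorphisms. In one direction, the spectrum functor $T\mapsto\SpecGnC{T}$ together with its structure sheaf $\Ocal_{\SpecGnC{T}}$, constructed in Section~\ref{sec:sheaves}, defines a map from $(\Gamma\text{-Semirings})^{\mathrm{op}}$ to $\mathbf{Spc}_\Gamma^{\mathrm{nc}}$. In the reverse direction, define the global-sections functor
\[
\Gamma_{\mathrm{nc}} : \mathbf{Spc}_\Gamma^{\mathrm{nc}} \longrightarrow (\Gamma\text{-Semirings})^{\mathrm{op}},\qquad (X,\Ocal_X)\longmapsto \Ocal_X(X),
\]
where the $n$-ary operation on $\Ocal_X(X)$ is inherited slotwise from the sheafified $\tmu$-structure, and morphisms are sent to their section-level pullbacks.

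Well-definedness of $\Gamma_{\mathrm{nc}}$ is stalk-local: additivity in each $T$-slot, $n$-ary associativity, zero-absorption, and the $\Gamma$-parameter axioms transfer from each stalk $\Ocal_{X,x}$, which is already a local $\Gamma$-semiring by the Local Behavior Theorem. The counit $\eta_T : T \to \Gamma_{\mathrm{nc}}(\SpecGnC{T})$ is then analyzed in two steps. Injectivity reduces to prime separation: every nonzero $a\in T$ lies outside some prime $\Gamma$-ideal $P$, so its image in $T_P$ is nonzero. Surjectivity and compatibility with $\tmu$ follow from the sheaf axiom on the basis $\{D(a,\gamma_1,\dots,\gamma_{n-1})\}$ together with the identification of sections over a basic open with the corresponding localization $T_a$; because the structure sheaf was constructed by localizing $\tmu$ itself, the convolution on global sections recovers $\tmu$ on $T$ on the nose.

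The main obstacle is the unit $X \cong \SpecGnC{\Gamma_{\mathrm{nc}}(X)}$ on the spatial side, and it is here that the compact-generation hypothesis enters. I would axiomatize the objects of $\mathbf{Spc}_\Gamma^{\mathrm{nc}}$ as sober, quasi-compact, $T_0$ locally $\Gamma$-semiringed spaces admitting a basis of compact opens of the form $D(a,\vec{\gamma})$ stable under finite $\Gamma$-convolution intersections, i.e.\ spectral $\Gamma$-spaces in Hochster's sense adapted to the $n$-ary setting. Under this axiomatization, each $x\in X$ determines the prime $\Gamma$-ideal $\mathfrak{p}_x = \{s\in\Ocal_X(X)\mid s_x \in \mathfrak{m}_x\}$; sobriety renders $x\mapsto\mathfrak{p}_x$ a bijection on underlying sets, and matching of the compact-open bases promotes this bijection to a homeomorphism of locally $\Gamma$-semiringed spaces.

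The subtlest point, where most of the real work concentrates, is verifying that two-sided primeness of $\mathfrak{p}_x$ is preserved by the vanishing construction without collapsing the directional asymmetry intrinsic to non-commutativity of $\tmu$: in the commutative case one can symmetrize slots, but here one must track left and right insertions separately, and both must independently detect the prime. This is what forces the $\Gamma$-convolution-stability of the basis rather than mere topological compact-generation. Preservation of the $n$-ary convolution structure under the equivalence is then formal, since both functors are defined through the $\tmu$-localized structure sheaf and the stalkwise $n$-ary product is transported across $\eta$ and the unit by construction; the result is a $\Gamma$-semiring-enriched equivalence, not merely a categorical one.
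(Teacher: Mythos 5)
Your proof takes a genuinely different route from the paper's. The paper's sketch treats an object of $\mathbf{Spc}_\Gamma^{\mathrm{nc}}$ as a dg-$\Gamma$-\emph{category} $\Ccal$ (this is what ``compactly generated'' refers to — compactly generated triangulated/dg-categories in the sense of Neeman and Rosenberg), recovers the semiring as $A_\Gamma=\End_{\Ccal}(\mathbbm{1})$, and invokes Rosenberg/Gabriel-type reconstruction of $\Ccal$ from $\SpecGnC{A_\Gamma}$ up to Morita equivalence. You instead read $\mathbf{Spc}_\Gamma^{\mathrm{nc}}$ as a category of spectral locally $\Gamma$-semiringed \emph{spaces} and build the equivalence from the classical $\Spec$ / global-sections adjunction in the Stone--Hochster style. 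These are not just different proof strategies but different interpretations of the left-hand side of the asserted equivalence, so you are proving (or attempting to prove) a different theorem than the one the paper has in mind.

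Within your own axiomatization there are two genuine gaps. First, the injectivity argument for $\eta_T:T\to\Ocal_{\SpecGnC{T}}(\SpecGnC{T})$ via ``every nonzero $a$ lies outside some prime'' is false: this requires the intersection of all primes to vanish, and already for commutative rings a nilpotent element lies in every prime yet $R\to\Gamma(\Spec R,\Ocal)$ is still an isomorphism, proved via the sheaf axiom on a finite $D(f_i)$-cover, not via prime separation. The correct argument must glue along a basic open cover, and in the present setting one would further have to verify that $\Ocal(U)=\varprojlim_{P\in U}T_P$ as defined in the paper actually satisfies that gluing identity for $T$, which is not automatic. Second, and more fundamentally, the unit $X\cong\SpecGnC{\Gamma_{\mathrm{nc}}(X)}$ does not follow from $X$ being a spectral locally $\Gamma$-semiringed space. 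Hochster's theorem reconstructs the \emph{topological space} from its lattice of compact opens, but says nothing about the structure sheaf; there are spectral locally ringed spaces whose global sections fail to reproduce the space under $\Spec$ (non-affine schemes with spectral underlying space). To force the unit to be an isomorphism you would have to build ``affineness'' into the definition of $\mathbf{Spc}_\Gamma^{\mathrm{nc}}$, i.e.\ restrict to the essential image of $\SpecGnC{(-)}$, at which point the equivalence is tautological rather than a theorem. The directional-asymmetry observation about two-sided primeness is a reasonable concern but is downstream of these more basic issues.
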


\begin{proof}[Sketch]
Each dg-$\Gamma$-category $\Ccal$ determines its semiring of endomorphisms
$A_\Gamma = \End_\Ccal(\mathbbm{1})$.
Morphisms in $\Ccal$ correspond to $\Gamma$-linear maps under convolution.
Conversely, $\SpecGnC{A_\Gamma}$ reconstructs $\Ccal$ up to Morita equivalence.
\end{proof}

\begin{remark}
This duality generalizes both Gelfand–Naimark and Gabriel's 
reconstruction theorem, and conceptually parallels 
non-commutative spectral reconstruction in the sense of 
Rosenberg~\cite{Rosenberg1995}.
\end{remark}

\subsection{Homotopical Cohomology and Motives}
\begin{definition}[Homotopy $\Gamma$-cohomology]
For $\Fcal\in \mathbf{D}(\QCoh(\SpecGnC{T}))$, define
\[
H^r_\Gamma(\Fcal) = H^r(\mathbf{R}\Gamma(\SpecGnC{T},\Fcal)),
\qquad
H_r^\Gamma(\Fcal) = H_r(\Fcal\otimes^L_{\Ocal_{\SpecGnC{T}}}\Ocal_{\SpecGnC{T}}).
\]
\end{definition}

\begin{theorem}[Non-commutative $\Gamma$-motivic equivalence]
The derived category $\mathbf{D}(\QCoh(\SpecGnC{T}))$
is a stable, symmetric monoidal $\infty$-category,
admitting an internal Hom and tensor structure that define
the \emph{derived $\Gamma$-motivic topos}:
\[
\mathbf{DM}_\Gamma^{\mathrm{nc}}(\SpecGnC{T})
   = \Stab\big(\mathbf{D}(\QCoh(\SpecGnC{T}))\big),
\]
whose compact objects classify cohomological $\Gamma$-motives.
\end{theorem}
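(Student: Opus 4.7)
The plan is to verify the structural claims in sequence---stability, the closed symmetric monoidal structure, the stabilization, and the classification of compact objects---each built on the dg-enhancement and the local--global machinery already established.

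First, I would establish that $\mathbf{D}(\QCoh(\SpecGnC{T}))$ is a stable $\infty$-category. The preceding theorem provides a pretriangulated dg-enhancement $(\SpecGnC{T})_{\mathrm{dg}}$ whose homotopy category recovers $\mathbf{D}(\QCoh)$. Applying the dg-nerve construction to this enhancement (in the sense of Lurie's Higher Algebra) produces the required stable $\infty$-category, with shifts and fiber/cofiber sequences induced by dg mapping cones; the existence of enough injectives in $\QCoh(\SpecGnC{T})$ ensures the model structure needed to make the nerve well-defined.

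Second, I would construct the closed symmetric monoidal structure. The tensor bifunctor can be defined stalkwise using the positional tensor product $\otimes^{(j,k)}_{\Gamma}$ on bi-$\Gamma$-modules, derived via flat (cofibrant) resolutions, with the internal Hom supplied by the positional $\underline{\Hom}^{(j,k)}_{\Gamma}$. The local--global correspondence of Theorem~\ref{thm:localglobal} then globalizes these to the derived bifunctors $\LTorG$ and $\RExtG$, and the coherence data for the symmetric monoidal structure at the $\infty$-categorical level would be imposed through the operadic framework of Higher Algebra. For the stabilization step, I would invoke the universal property of $\operatorname{Stab}(-)$: since the underlying $\infty$-category is already stable, $\operatorname{Stab}(\mathbf{D}(\QCoh))$ is naturally equivalent to it (or to a Tate-twist refinement if one formally inverts a $\Gamma$-parameter suspension), and the symmetric monoidal structure extends uniquely. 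For the final clause on compact objects, I would apply Neeman's Brown representability together with Thomason--Trobaugh style arguments to identify compact objects with perfect complexes of bi-$\Gamma$-modules, i.e.\ those admitting finite resolutions by finitely generated projectives; these are the cohomological $\Gamma$-motives.

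The main obstacle will be verifying symmetric monoidal coherence over a genuinely non-commutative base. In ordinary non-commutative geometry the bi-module tensor product is only monoidal, so the required braiding must come from the commutativity of $\Gamma$ together with the slot-sensitive structure of $\otimes^{(j,k)}_{\Gamma}$; checking that these combine to yield a coherent $E_\infty$-algebra structure on $\Ocal_{\SpecGnC{T}}$, with no higher obstructions to the symmetry constraint in positive cohomological degrees, is the delicate point at which the argument must depart substantively from the commutative derived $\Gamma$-geometry of earlier work. A related subtlety is the passage from stalkwise coherence to global coherence after sheafification, which I expect to handle by appealing to the fpqc-descent property of $\mathbf{QCoh}_{\SpecGnC{T}}$ established in the previous subsection.
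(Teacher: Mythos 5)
The paper offers no proof of this theorem --- after the statement it passes directly to remarks on the commutative specialization --- so there is no argument of the author's to compare yours against. Your proposal is the only argument on the table, and it correctly decomposes the claim into its constituents: stability, closed symmetric monoidal structure, stabilization, and identification of compacts. The stability step (dg-nerve of the pretriangulated enhancement) and the Thomason--Trobaugh/Neeman identification of compact objects with perfect complexes of bi-$\Gamma$-modules should go through roughly as you sketch, and your observation that $\operatorname{Stab}$ applied to an already stable $\infty$-category is (up to equivalence) the identity is correct; it exposes that the displayed formula defining $\mathbf{DM}_\Gamma^{\mathrm{nc}}$ is a renaming rather than a new construction.

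The genuine gap is precisely the one you flag in your final paragraph and then defer: the symmetric monoidal structure. The positional tensor $\otimes^{(j,k)}_{\Gamma}$ is a bimodule tensor over $T$, and over a genuinely non-commutative $T$ this bifunctor is at best monoidal, not symmetric --- $M\otimes^{(j,k)}_{\Gamma} N$ and $N\otimes^{(j,k)}_{\Gamma} M$ are in general not even abstractly isomorphic, let alone coherently so. Commutativity of $\Gamma$ does not help, since the asymmetry resides in the ordering of the $T$-slots and the left/right $T$-actions, not in the $\Gamma$-parameters; and fpqc descent cannot globalize a braiding that is absent on stalks. So the assertion that $\Ocal_{\SpecGnC{T}}$ carries an $E_\infty$-algebra structure in the non-commutative case is not a "delicate coherence check" that Lurie's operadic machinery will discharge --- it requires an additional structural hypothesis on $T$ (for instance an Azumaya-type condition, or a restriction to a Drinfeld-center-like subcategory of bimodules whose left and right actions agree up to $\Gamma$-twist), none of which the theorem assumes. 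As stated the theorem overclaims; your proposal diagnoses but does not close this gap, and a correct proof would have to either impose such a hypothesis or weaken "symmetric monoidal" to "monoidal".
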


\begin{remark}[Bridge with commutative case]
When $\tmu$ is symmetric,
$\SpecGnC{T}$ becomes commutative and
$\mathbf{D}(\QCoh(\SpecGnC{T}))$
reduces to the derived $\Gamma$-geometry of Paper~E.
Thus the non-commutative framework completes the
\emph{internal unification} between algebraic and geometric $\Gamma$-theories.
\end{remark}

\begin{remark}[Conceptual significance]
This synthesis defines a unified categorical physics of $\Gamma$-structures:
homological flow replaces spacetime geometry,
and morphisms of $\Gamma$-semirings act as quantized correspondences.
The derived $\Gamma$-geometry thus realizes the vision of a
universal homological law where algebra, geometry, and dynamics
are facets of a single categorical unity.
\end{remark}

\section{Consequences and Structure Theorems}
\label{sec:structure}

The preceding sections have established the homological and
geometric framework of non-commutative $n$-ary $\Gamma$-semirings.
We now extract the principal structural consequences:
a Wedderburn–Artin-type decomposition,
a derived Morita equivalence,
and a duality theorem linking the primitive spectrum
to the simple derived objects of
$\mathbf{D}(\QCoh(\SpecGnC{T}))$.

\subsection{Wedderburn–Artin decomposition in the $n$-ary $\Gamma$-context}
\begin{definition}[Semisimple $\Gamma$-semiring]
A non-commutative $n$-ary $\Gamma$-semiring $(T,+,\Gamma,\mu)$
is \emph{semisimple} if every admissible bi-module $M$
decomposes as a finite direct sum of simple submodules
and the Jacobson radical
$\Jac_\Gamma(T)$—defined as the intersection of all maximal
bi-ideals—vanishes.
\end{definition}

\begin{theorem}[Non-commutative Wedderburn–Artin decomposition]
\label{thm:WA}
If $T$ is semisimple and finitely generated over $\Gamma$,
then there exist pairwise orthogonal central idempotents
$\{e_i\}_{i=1}^r\subseteq T$ and division $\Gamma$-semirings
$D_i$ such that
\[
T \ \cong\  \bigoplus_{i=1}^r M_{n_i}^{(n)}(D_i),
\]
where $M_{n_i}^{(n)}(D_i)$ denotes the full $n$-ary matrix
$\Gamma$-semiring of degree $n_i$ over $D_i$
whose multiplication is induced by $\mu$.
\end{theorem}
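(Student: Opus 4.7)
My plan is to adapt the classical Wedderburn--Artin argument to the bi-$\Gamma$-module framework of the preceding sections, with the $n$-ary operation $\tmu$ replacing the binary product. The semisimplicity hypothesis together with $\Jac_\Gamma(T)=0$ gives a rigid decomposition of $T$ as a bi-$\Gamma$-module over itself; the genuinely new content is translating each isotypic block into a full $n$-ary matrix $\Gamma$-semiring, for which a $\Gamma$-variant of the Jacobson density theorem will be essential.

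First I would regard $T$ as a bi-$\Gamma$-module over itself via slot-wise insertion in $\tmu$. Finite generation over $\Gamma$ combined with the semisimplicity hypothesis yields a finite decomposition $T = \bigoplus_{\alpha} S_\alpha$ into simple bi-submodules, which after grouping isomorphic summands refines to $T \cong \bigoplus_{i=1}^{r} S_i^{\oplus n_i}$ with pairwise non-isomorphic $S_i$. Next I would establish a Schur-type lemma in this setting: any nonzero $\Gamma$-linear endomorphism of a simple bi-$\Gamma$-module is invertible, because its kernel and image are $\Gamma$-submodules and the vanishing radical excludes obstructing nilpotents. This makes $D_i := \End_T^{\Gamma}(S_i)$ a division $n$-ary $\Gamma$-semiring, with $n$-ary operation inherited from composition together with the ambient $\tmu$.

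Then I would construct the central idempotents $\{e_i\}_{i=1}^{r}$ as canonical projections onto the isotypic components $S_i^{\oplus n_i}$. Orthogonality is forced by non-isomorphism of distinct $S_i$ (any cross-projection would give a nonzero $\Gamma$-map between non-isomorphic simples, contradicting Schur), centrality comes from the uniqueness of the isotypic decomposition under translation by $\tmu$-brackets with arbitrary $\Gamma$-parameters, and the completeness relation $\sum_i e_i = 1_T$ follows from finite generation. The block associated to $e_i$ then acts faithfully on $S_i^{\oplus n_i}$ by $D_i$-linear endomorphisms, and the candidate isomorphism onto $M_{n_i}^{(n)}(D_i)$ sends $t$ to the matrix of its positional action on $S_i^{\oplus n_i}$ viewed as a right-$D_i$-module of rank $n_i$.

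The main obstacle is the $n$-ary density theorem and bracket compatibility. One must fix the canonical threading of the $(n-1)$ $\Gamma$-parameters through an $n$-fold matrix bracket so that the induced $n$-ary operation on $M_{n_i}^{(n)}(D_i)$ matches $[\,\cdot\,]_{\vec{\gamma}}$ inherited from $\tmu$ on $T$; the convention is dictated by the slot/index rules fixed in the Notation section, but verifying that density in each slot simultaneously produces all elementary matrices of $M_{n_i}^{(n)}(D_i)$ requires an induction on $n$ using the $n$-ary associativity of $\tmu$ together with the primitivity of each $S_i$. Once density and bracket compatibility are established, the decomposition $T \cong \bigoplus_{i=1}^{r} M_{n_i}^{(n)}(D_i)$ follows by assembling the $e_i$-blocks and invoking orthogonality.
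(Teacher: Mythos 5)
Your proposal follows the same classical Wedderburn--Artin route as the paper's own outline: use semisimplicity (equivalently, vanishing of $\Jac_\Gamma(T)$) to decompose $T$ as a bi-$\Gamma$-module over itself into isotypic components, read off central orthogonal idempotents from the endomorphism structure of the regular representation, obtain the division $\Gamma$-semirings $D_i$ via a Schur-type lemma, and identify each isotypic block with a full $n$-ary matrix $\Gamma$-semiring $M_{n_i}^{(n)}(D_i)$. If anything you are more explicit than the paper's sketch, which never names the Schur or density steps and is silent on the $\Gamma$-parameter threading needed to make the $n$-ary matrix bracket match the one induced by $\mu$, a genuine difficulty you are right to flag as the crux of the final isomorphism.
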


\begin{proof}[Outline of proof]
Construct the radical $\Jac_\Gamma(T)$ as the intersection of
annihilators of all simple bi-modules.
Since ${\nTGMod{T}}^{\mathrm{bi}}$ is exact with enough projectives,
the radical functor is left-exact and idempotent.
The quotient $T/\Jac_\Gamma(T)$ acts semisimply on all
finitely generated bi-modules.
Decompose $T/\Jac_\Gamma(T)$ via central orthogonal idempotents
obtained from the $\Gamma$-endomorphism algebra of
its regular representation.
Each simple component is isomorphic to an $n$-ary matrix
$\Gamma$-semiring $M_{n_i}^{(n)}(D_i)$.
\end{proof}

\begin{remark}
For $n=2$ and $\Gamma$ commutative this reduces to the
classical Wedderburn–Artin theorem.
For $n>2$, the decomposition reflects positional symmetry
classes of $\mu$, revealing a multi-layered semisimple
spectrum.
\end{remark}

\subsection{Derived Morita theory}
\begin{definition}[Derived Morita context]
Two non-commutative $n$-ary $\Gamma$-semirings $T$ and $S$
are \emph{derived Morita equivalent}
if there exists a tilting bi-module
\[
\Ecal \in
\mathbf{D}\!\bigl(
   {\nTGMod{T\text{--}S}}^{\mathrm{bi}}
\bigr).
\]

such that
\[
\mathbf{R}\!\Hom_T(\Ecal,\Ecal)
   \ \simeq\ S,
\qquad
\mathbf{R}\!\Hom_S(\Ecal,\Ecal)
   \ \simeq\ T,
\]
and the induced functor
\[
\Phi_{\Ecal}
   = -\!\otimes^L_T\Ecal:
     \mathbf{D}(\QCoh(\SpecGnC{T}))\!
       \longrightarrow\!
     \mathbf{D}(\QCoh(\SpecGnC{S}))
\]
is an exact equivalence of triangulated categories.
\end{definition}

\begin{theorem}[Derived Morita equivalence]
\label{thm:Morita}
Semisimple $n$-ary $\Gamma$-semirings are derived Morita
equivalent if and only if their categories of finitely generated
bi-modules are equivalent as exact categories.
Equivalently, the following are equivalent:
\begin{enumerate}[label=(\roman*)]
  \item $\mathbf{D}(\QCoh(\SpecGnC{T})) \simeq
         \mathbf{D}(\QCoh(\SpecGnC{S}))$;
  \item There exists a tilting complex $\Ecal$ as above;
  \item $T$ and $S$ have isomorphic non-commutative spectra
        and equivalent homotopy categories of projective resolutions.
\end{enumerate}
\end{theorem}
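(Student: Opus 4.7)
The plan is to establish the three equivalences by the cyclic chain (ii)$\Rightarrow$(i)$\Rightarrow$(iii)$\Rightarrow$(ii), with the Wedderburn--Artin decomposition of Theorem~\ref{thm:WA} and the Gelfand-type reconstruction of Theorem~\ref{thm:gelfand} as the two main structural inputs. Throughout the argument, the dg-enhancement of $\mathbf{D}(\QCoh(\SpecGnC{T}))$ allows one to treat $\mathbf{R}\!\Hom$ and $\otimes^{L}$ as genuine mapping complexes between cofibrant replacements, so the tilting conditions become honest quasi-isomorphisms of dg-$\Gamma$-semirings rather than merely abstract isomorphisms in the homotopy category.

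The implication (ii)$\Rightarrow$(i) follows a $\Gamma$-equivariant version of Rickard's theorem. The functor $\Phi_{\Ecal}=-\otimes^{L}_{T}\Ecal$ admits $\mathbf{R}\!\Hom_{S}(\Ecal,-)$ as right adjoint, and the tilting identities $\mathbf{R}\!\Hom_{T}(\Ecal,\Ecal)\simeq S$ and $\mathbf{R}\!\Hom_{S}(\Ecal,\Ecal)\simeq T$ force the unit and counit to be quasi-isomorphisms on the compact generators $T$ and $S$. Triangulated extension propagates this to all of $\mathbf{D}(\QCoh(\SpecGnC{T}))$, using that semisimplicity of $T$ makes $\QCoh(\SpecGnC{T})$ compactly generated by its simple objects.

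For (i)$\Rightarrow$(iii), any triangulated equivalence transports compact objects, simples, and their annihilator ideals, so the primitive $\Gamma$-spectrum is a purely categorical invariant; combining with Theorem~\ref{thm:gelfand} yields a homeomorphism $\SpecGnC{T}\cong\SpecGnC{S}$, and derived equivalences preserve compactness and mapping spaces, so projective resolutions of simples correspond on both sides. For (iii)$\Rightarrow$(ii), the Wedderburn--Artin decomposition writes $T\cong\bigoplus_i M_{n_i}^{(n)}(D_i^{T})$ with analogous data for $S$; the spectra homeomorphism matches central orthogonal idempotents, and the equivalence of projective-resolution homotopy categories forces block-wise derived Morita equivalence, from which one extracts standard matrix-Morita bi-modules $\Ecal_i$ and assembles $\Ecal=\bigoplus_i\Ecal_i$ in $\mathbf{D}\!\bigl({\nTGMod{T\text{--}S}}^{\mathrm{bi}}\bigr)$.

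The main obstacle is the assembly step in (iii)$\Rightarrow$(ii): after the Wedderburn--Artin reduction, the block-wise tilting complexes must be shown to glue into a single bi-$\Gamma$-module whose left positional $j$-action over $T$ and right positional $k$-action over $S$ are simultaneously compatible with the $n$-ary multiplication $\mu$ on both sides. I expect this to follow from naturality of the central idempotent decomposition in the $\Gamma$-endomorphism algebra of the regular bi-module together with the coequalizer description of $\otimes^{(j,k)}_{\Gamma}$ from the Preliminaries, but the positional bookkeeping---tracking which of the $n$ slots receives the $(j,k)$ action after Morita transport through each block---will require careful verification, and handling the division $\Gamma$-semirings $D_i$ when $n>2$ is where the classical argument genuinely demands new input.
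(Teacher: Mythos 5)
Your cyclic chain (ii)$\Rightarrow$(i)$\Rightarrow$(iii)$\Rightarrow$(ii) is a genuinely different organisation from the paper's, which argues (ii)$\Rightarrow$(iii)$\Rightarrow$(ii) directly and handles (i)$\Leftrightarrow$(ii) by a Rickard-type theorem for the exact category ${\nTGMod{T}}^{\mathrm{bi}}$. The sharpest divergence is in (iii)$\Rightarrow$(ii): the paper constructs the tilting bi-$\Gamma$-module in a single stroke as the Yoneda/standard bimodule $\Ecal=\Hom_T(-,S)$ produced by the equivalence $\Proj\!\bigl({\nTGMod{T}}^{\mathrm{bi}}\bigr)\simeq\Proj\!\bigl({\nTGMod{S}}^{\mathrm{bi}}\bigr)$, exactly as in classical Morita theory, so that the positional $(j,k)$-bi-actions on $\Ecal$ are inherited automatically from those on $T$ and $S$ through the adjunction and require no gluing. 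You instead detour through the Wedderburn--Artin decomposition of Theorem~\ref{thm:WA}, extract block-wise Morita bimodules, and reassemble; this is precisely what manufactures the positional-bookkeeping and division-$\Gamma$-semiring-for-$n>2$ difficulties you yourself flag. Those difficulties are real, but they are artifacts of your route rather than of the theorem --- the direct Yoneda construction sidesteps them, and you should adopt it. On the other hand, your treatment of (i)$\Rightarrow$(iii) is more honest than the paper's: you actually address the ``isomorphic non-commutative spectra'' clause of (iii) by transporting simples and annihilators and invoking reconstruction, whereas the paper's sketch only records the equivalence of homotopy categories of projectives and is silent on the spectrum. Two small corrections there: Theorem~\ref{thm:duality}, which identifies $\mathsf{SimDer}(\SpecGnC{T})$ with $\Prim_\Gamma(T)$, is the apposite citation rather than Theorem~\ref{thm:gelfand}; and you should say explicitly why, in the semisimple case, $\Prim_\Gamma(T)$ recovers all of $\SpecGnC{T}$ (so that a bijection of primitive spectra upgrades to the claimed homeomorphism of non-commutative spectra).
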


\begin{proof}[Idea]
If $\Phi_{\Ecal}$ is an exact equivalence,
its quasi-inverse $\Psi$ induces equivalence on homotopy
categories of projectives, yielding (iii).
Conversely, if 

\[
\Proj\!\bigl({\nTGMod{T}}^{\mathrm{bi}}\bigr)
   \;\simeq\;
\Proj\!\bigl({\nTGMod{S}}^{\mathrm{bi}}\bigr).
\],

then the associated Yoneda bimodule
$\Ecal=\Hom_T(-,S)$ satisfies the tilting conditions.
Equivalence (i)$\!\Leftrightarrow\!$(ii) follows from
Rickard’s theorem applied to the exact category
${\nTGMod{T}}^{\mathrm{bi}}$.
\end{proof}

\begin{remark}
The derived Morita theory establishes that
homological and geometric data depend only on
the derived category $\mathbf{D}(\QCoh(\SpecGnC{T}))$,
not on the particular presentation of $T$.
Thus $\SpecGnC{T}$ behaves as a derived stack
classifying bi-module deformations.
\end{remark}

\subsection{Duality between primitive spectra and simple derived objects}
\begin{definition}[Primitive spectrum]
The primitive $\Gamma$-spectrum of $T$ is
\[
\Prim_\Gamma(T)
   = \{\,P\subseteq T \mid
        P=\Ann_\Gamma(S)\text{ for some simple bi-module }S\,\}.
\]
\end{definition}

\begin{theorem}[Spectral–derived duality]
\label{thm:duality}
There is a contravariant equivalence of categories
\[
\mathsf{SimDer}(\SpecGnC{T})
   \ \simeq\
   \Prim_\Gamma(T),
\]
where $\mathsf{SimDer}(\SpecGnC{T})$
denotes the full subcategory of
$\mathbf{D}(\QCoh(\SpecGnC{T}))$
generated by simple objects.
\end{theorem}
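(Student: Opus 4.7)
The plan is to exhibit mutually quasi-inverse contravariant functors between $\mathsf{SimDer}(\SpecGnC{T})$ and $\Prim_\Gamma(T)$, using the annihilator operation as the bridge. The first task is to reduce the derived side to the heart: if $\mathcal{F}^{\bullet}\in\mathbf{D}(\QCoh(\SpecGnC{T}))$ is simple, then any nontrivial truncation $\tau^{\leq r}\mathcal{F}^{\bullet}$ is either zero or all of $\mathcal{F}^{\bullet}$, forcing concentration in a single cohomological degree. Combined with the full faithfulness of $M\mapsto\widetilde{M}$ established in Section~\ref{sec:sheaves} and the affine nature of $\SpecGnC{T}$, this identifies the isomorphism classes in $\mathsf{SimDer}(\SpecGnC{T})$, up to shift, with isomorphism classes of simple bi-$\Gamma$-modules over $T$.

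Next, I would define $\Phi:\mathsf{SimDer}(\SpecGnC{T})^{\mathrm{op}}\to\Prim_\Gamma(T)$ on objects by $\Phi(\widetilde{S}[n]):=\Ann_\Gamma(S)$. This is shift-invariant and lands in $\Prim_\Gamma(T)$ by the definition of the primitive spectrum. A nonzero morphism $\widetilde{S_1}[n_1]\to\widetilde{S_2}[n_2]$ must have $n_1=n_2$ and, by a Schur-type argument adapted to the positional $\Gamma$-actions $\bullet^{(j)}_{\gamma}$, must be an isomorphism; hence $\Phi$ is functorial and contravariant through the reversal of annihilator inclusions. The candidate inverse $\Psi:\Prim_\Gamma(T)\to\mathsf{SimDer}(\SpecGnC{T})^{\mathrm{op}}$ sends a primitive ideal $P=\Ann_\Gamma(S)$ to $\widetilde{S}$.

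The main obstacle is the well-definedness of $\Psi$: any two simple bi-$\Gamma$-modules with equal annihilator must be isomorphic. This requires an $n$-ary $\Gamma$-analogue of the Jacobson density theorem, asserting that the image of $T$ in the $\Gamma$-endomorphism semiring of a simple bi-module $S$ is dense with respect to every positional action $\bullet^{(j)}_{\gamma}$, so that $T/\Ann_\Gamma(S)$ determines $S$ up to isomorphism as its unique simple faithful bi-module. The Wedderburn--Artin decomposition from Theorem~\ref{thm:WA} supplies the structural input once one passes to $T/\Jac_\Gamma(T)$: each simple block $M_{n_i}^{(n)}(D_i)$ admits a unique simple bi-module, whose annihilator is the complementary idempotent sum. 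The technical crux will be verifying that density persists simultaneously across all $n-1$ positional indices; this is where the higher arity of $\tmu$ genuinely enters, and a careful slot-by-slot induction using the positional tensor product $\otimes^{(j,k)}_{\Gamma}$ appears to be the natural tool.

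Once density is established, verifying $\Phi\circ\Psi\cong\id$ and $\Psi\circ\Phi\cong\id$ becomes routine: the first composition is tautological by construction, and the second follows from the density statement together with the order-reversing behavior of $\Ann_\Gamma(-)$. The contravariant character of the resulting equivalence then encodes the familiar inclusion-reversal between submodule lattices and annihilator ideals, globalized over $\SpecGnC{T}$.
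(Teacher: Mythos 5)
Your forward direction ($\Phi:\widetilde S[n]\mapsto\Ann_\Gamma(S)$) coincides with the paper's, but your inverse functor takes a genuinely different route. The paper's proof defines $\Psi(P)$ intrinsically as the residue complex $\kappa(P)=T_P/PT_P$, built from the localization at $P$; you instead choose a simple bi-module $S$ with $\Ann_\Gamma(S)=P$ and send $P\mapsto\widetilde S$, which forces you to prove well-definedness, i.e.\ that $\Ann_\Gamma$ separates simple bi-modules. The paper's $\kappa(P)$ construction is more geometric and sidesteps the choice, though of course it still must show that $\kappa(P)$ is simple and that $\Ann_\Gamma(\kappa(P))=P$, so the two routes face essentially the same underlying structure problem; yours simply surfaces it as a density theorem, which is arguably more transparent.

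The genuine gap is the density lemma you flag as the technical crux. First, you do not prove it; you only name it and point at Theorem~\ref{thm:WA} and a proposed slot-by-slot induction using $\otimes^{(j,k)}_{\Gamma}$. Second, and more seriously, the assertion that two simple (bi-)modules with equal annihilator are isomorphic is \emph{false} in general non-commutative settings without an Artinian or semisimple hypothesis: for ordinary non-commutative rings the Weyl algebra already has a single primitive ideal $(0)$ but infinitely many non-isomorphic simple modules. The Wedderburn--Artin decomposition you invoke requires, in this paper, that $T$ be semisimple and finitely generated over $\Gamma$, hypotheses that Theorem~\ref{thm:duality} does not state. So your argument either silently restricts the scope of the theorem or needs a replacement for the density step that works more broadly (for instance, working with the residue complex $\kappa(P)$ as the paper does, or imposing an explicit Noetherian/Artinian hypothesis up front). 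Until one of those repairs is made, the well-definedness of $\Psi$ — and hence the claimed equivalence — is not established.

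Your initial truncation argument is a reasonable way to pin down what ``simple object of $\mathbf{D}(\QCoh(\SpecGnC{T}))$'' should mean (a shift of a simple object in the heart), and is a useful clarification the paper's sketch omits; just note that it tacitly assumes the standard t-structure exists and is bounded, which should be stated.
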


\begin{proof}
For each $S\in\mathsf{SimDer}$,
the annihilator of its endomorphism algebra
$\End_{\mathbf{D}}(S)$ is a primitive $\Gamma$-ideal.
Conversely, for each $P\in\Prim_\Gamma(T)$,
the residue complex
$\kappa(P)=T_P/PT_P$
yields a simple object of
$\mathbf{D}(\QCoh(\SpecGnC{T}))$.
These assignments are inverse up to equivalence.
\end{proof}

\begin{corollary}[Geometric interpretation]
The functor
$S\mapsto\Supp(S)\subseteq\SpecGnC{T}$
embeds $\mathsf{SimDer}(\SpecGnC{T})$
as the set of closed points of $\SpecGnC{T}$,
and the topology of $\Prim_\Gamma(T)$
is recovered from the Zariski topology of the spectrum.
\end{corollary}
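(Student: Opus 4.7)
The plan is to leverage Theorem~\ref{thm:duality} as the structural backbone and then read off the two assertions of the corollary, namely the bijection between simples and closed points and the topological identification, as consequences of the residue-complex construction $P \mapsto \kappa(P) = T_P/PT_P$. My argument will proceed in three stages: (i) compute the support of each $\kappa(P)$ and show it is the single closed point $\{P\}$; (ii) prove that every object of $\mathsf{SimDer}(\SpecGnC{T})$ has this form, hence lands on a closed point; (iii) identify the subspace topology on $\mathrm{im}(\Supp)$ with the Jacobson topology on $\Prim_\Ga(T)$.

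For stage (i), I would use the local behavior theorem for the structure sheaf: the stalk $\Ocal_{\SpecGnC{T},Q}$ is the local $\Ga$-semiring $T_Q$ with unique maximal $\Ga$-ideal $QT_Q$. Localization is exact, so $(\kappa(P))_Q = T_P/PT_P \otimes_T T_Q$, and by the descent property of primes one checks that this vanishes unless $Q \subseteq P$ and $Q \supseteq P$ at the level of localization, forcing $Q = P$. Hence $\Supp(\kappa(P)) = \overline{\{P\}}$, and the primeness assumption together with the $T_0$/spectral character of the non-commutative Zariski topology reduces this to a single point precisely when $P$ is maximal among primitive ideals; since every primitive ideal is the annihilator of a simple, and simples admit no proper submodules, this maximality is automatic for the residues appearing in $\mathsf{SimDer}$.

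For stage (ii), by Theorem~\ref{thm:duality} every simple $S \in \mathsf{SimDer}(\SpecGnC{T})$ is equivalent, up to the duality, to some $\kappa(P)$ for a primitive $P$. Combining with stage (i), the assignment $S \mapsto \Supp(S)$ factors through the set $\Prim_\Ga(T)$ embedded in $\SpecGnC{T}$ as the collection of closed points (the latter being exactly the maximal primitive $\Ga$-ideals by the local property of the stalks). Bijectivity onto the closed points is immediate from the inverse $P \mapsto \kappa(P)$ provided by Theorem~\ref{thm:duality}. For stage (iii), I would verify that the canonical map $\Prim_\Ga(T) \hookrightarrow \SpecGnC{T}$ intertwines the hull-kernel (Jacobson) closure on the left with the Zariski closure on the right: given a family $\{P_i\}$ the Jacobson closure equals the set of primitives containing $\bigcap_i P_i$, while the Zariski closure, computed via the subbasis $D(a,\vec{\gamma})$, picks out precisely the primes not avoiding $\bigcap_i P_i$ under $\mu$, which coincide on the primitive locus by the two-sided prime condition of the preliminary definition.

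The main obstacle, I expect, is stage (iii): matching the Jacobson-type closure on $\Prim_\Ga(T)$ with the induced non-commutative Zariski topology in the $n$-ary $\Ga$-setting. The subtlety is that the subbasis involves the $(n-1)$-tuple $\vec{\gamma} \in \Gamma^{n-1}$ and the insertion $\mu(P,\Gamma^{n-1})$, so a single closure on the spectrum side corresponds to a family of $\Ga$-parameterized annihilator conditions; one must show these parameters do not refine the topology beyond the Jacobson one, which should follow from the additivity of $\Ga$ and the two-sided primeness but requires a careful slot-by-slot compatibility check using the positional actions $\bullet^{(j)}_{\gamma}$. Once this compatibility is in hand, the embedding is a homeomorphism onto its image and the corollary follows.
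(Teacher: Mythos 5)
The paper itself gives no proof of this corollary, so there is nothing to compare your proposal against; I will therefore assess it on its own terms.

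Your overall strategy (reduce to the duality theorem and compute supports of residue complexes $\kappa(P)$) is the natural one, but stage~(i) contains a concrete error and then papers over the resulting gap. You assert that $\kappa(P)_Q = T_P/PT_P \otimes_T T_Q$ "vanishes unless $Q \subseteq P$ and $Q \supseteq P$, forcing $Q = P$." This is not what the localization calculation gives. The standard argument shows: if $Q \not\supseteq P$, pick $p \in P \setminus Q$; then $p$ kills $\kappa(P)$ but becomes a unit in $T_Q$, so the tensor product is zero. If $Q \supseteq P$, then $S_Q \subseteq S_P$, and elements of $S_P$ already act invertibly on $\kappa(P)$, so localizing at $Q$ changes nothing and $\kappa(P)_Q \cong \kappa(P) \neq 0$. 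Hence $\Supp(\kappa(P)) = V(P) = \overline{\{P\}}$, not $\{P\}$. The two inclusions do not both arise; only $Q \supseteq P$ does, and the support is a single point only when $P$ is a maximal (two-sided) $\Gamma$-ideal.

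Having recognized this, you try to close the gap with the remark that "this maximality is automatic for the residues appearing in $\mathsf{SimDer}$" because primitive ideals are annihilators of simples and simples have no proper submodules. That inference does not hold. Simplicity of $S$ says $\Ann_\Gamma(S)$ is primitive; it does \emph{not} say $\Ann_\Gamma(S)$ is maximal among two-sided ideals. In the non-commutative setting (already for ordinary rings, e.g.\ a simple infinite-dimensional algebra where $(0)$ is primitive but the spectrum is larger than a point) primitivity is strictly weaker than maximality, and the same distinction should persist for non-commutative $n$-ary $\Gamma$-semirings. So the claim that $\Supp$ lands exactly on the closed points requires an additional hypothesis — e.g.\ that $T$ is a Jacobson-type $\Gamma$-semiring, or that one restricts to the finitely generated/Artinian case as in Theorem~\ref{thm:WA} where primitive implies maximal — and neither you nor the paper supplies one. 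This is the real gap: either the class of $T$ must be restricted, or the corollary should assert that $\mathsf{SimDer}$ embeds onto the \emph{primitive} locus with $\Supp(S)$ being the Zariski closure $V(\Ann_\Gamma S)$, not a single closed point.

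Stage~(iii) as you sketch it is plausible and you correctly identify the $\Gamma$-parameterized subbasis as the delicate point, but it cannot be completed until stage~(i) is repaired, since the homeomorphism claim depends on knowing which points of $\SpecGnC{T}$ the map $\Supp$ actually hits.
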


\begin{remark}
This result extends Gabriel’s reconstruction theorem
to the derived non-commutative $\Gamma$-setting:
the geometric space $\SpecGnC{T}$ can be reconstructed from
its category of quasi-coherent $\Gamma$-sheaves.
\end{remark}

\subsection{Higher-order consequences}
\begin{theorem}[Homological regularity and cohomological dimension]
Let $T$ be a Noetherian $n$-ary $\Gamma$-semiring with finite
global dimension $d$.
Then:
\begin{enumerate}[label=(\roman*)]
  \item $\mathbf{D}^{\mathrm{b}}(\QCoh(\SpecGnC{T}))$
        is $d$-Calabi–Yau: there exists a canonical dualizing
        functor $(-)^{\vee}[d]$ with evaluation isomorphisms.
  \item \[
{\RExtG}^{\,r}_{\!\SpecGnC{T}}\!
   \bigl(\Ocal_{\SpecGnC{T}},
          \Ocal_{\SpecGnC{T}}\bigr)
   = 0
   \quad\text{for } r> d,
   \qquad
{\RExtG}^{\,d}_{\!\SpecGnC{T}}\!
   \bigl(\Ocal_{\SpecGnC{T}},
          \Ocal_{\SpecGnC{T}}\bigr)
   \neq 0.
\]
is generated by the
        dualizing complex $\omega_{\SpecGnC{T}}$.
\end{enumerate}
\end{theorem}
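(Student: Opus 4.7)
The plan is to reduce both parts to the algebraic statement that every bi-$\Gamma$-module in $T\text{-}\Gamma\mathrm{Mod}_{bi}$ admits a projective resolution of length at most $d$, and then globalize via the local--global correspondence of Theorem~\ref{thm:localglobal} together with the dualizing complex supplied by Theorem~\ref{thm:local-duality}. First I would fix the meaning of global dimension $d$ on the algebraic side: namely that $\ExtG^{r}_{T}(M,N)=0$ for all bi-modules $M,N$ and all $r>d$, while some pair realizes a nonzero $\ExtG^{d}$. This is a standard consequence of the existence of a length-$d$ projective resolution in the exact category $\Cat{Ab}$, which in the Noetherian setting is preserved by the quasi-coherent sheaf functor $M\mapsto\widetilde{M}$ (which is exact on projectives by the Exactness and Generators theorem of \S\ref{sec:sheaves}).

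For part (ii), I would apply the local--global isomorphism
\[
\ExtG^{r}_{T}(M,N)\;\cong\;H^{r}\bigl(\SpecGnC{T},\RExtG^{0}_{\SpecGnC{T}}(\widetilde{M},\widetilde{N})\bigr)
\]
to the pair $M=N=T$, whose associated sheaf is $\Ocal_{\SpecGnC{T}}$. Vanishing of $\ExtG^{r}_{T}(T,T)$ for $r>d$ transports to the vanishing of $\RExtG^{r}_{\SpecGnC{T}}(\Ocal,\Ocal)$ in the same range, and non-vanishing in degree $d$ is detected on stalks at a point where the algebraic extension group is nonzero. The identification of the top-degree class with the dualizing complex $\omega_{\SpecGnC{T}}$ follows by pairing the non-vanishing class with the trace morphism built into $\omega_{\SpecGnC{T}}$ via Theorem~\ref{thm:local-duality}.

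For part (i), I would establish the $d$-Calabi--Yau property by combining the non-commutative local duality
\[
\mathbf{R}\!\Hom_{\Ocal_{\SpecGnC{T}}}(\Fcal,\omega_{\SpecGnC{T}})\;\simeq\;\mathbf{R}\Gamma_{\mathrm{coh}}(\SpecGnC{T},\Fcal)^{\vee}
\]
with the triviality of the dualizing complex forced by finite global dimension: since the structure sheaf is its own resolution up to a shift, $\omega_{\SpecGnC{T}}\simeq\Ocal_{\SpecGnC{T}}[d]$ in $\mathbf{D}^{\mathrm{b}}(\QCoh(\SpecGnC{T}))$. This yields the functorial evaluation isomorphism $(-)\simeq (-)^{\vee\vee}[d][-d]$ on the bounded derived subcategory. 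The evaluation map is constructed as the adjunction unit $\Fcal\to\mathbf{R}\!\Hom(\mathbf{R}\!\Hom(\Fcal,\Ocal[d]),\Ocal[d])$, checked to be an isomorphism on a set of compact generators (the $\widetilde{T/I}$ for $\Gamma$-ideals $I$), hence on all of $\mathbf{D}^{\mathrm{b}}$ by triangulated extension.

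The main obstacle will be the rigorous identification $\omega_{\SpecGnC{T}}\simeq\Ocal_{\SpecGnC{T}}[d]$ in the non-commutative $n$-ary $\Gamma$-setting, because the dualizing complex was only posited abstractly in Theorem~\ref{thm:local-duality} and its positional-slot structure with respect to $\tmu$ must be shown compatible with the shift by the global dimension. I expect this to require a spectral-sequence argument converging from the algebraic $\ExtG^{\bullet}_{T}(T,T)$ to the sheaf-theoretic cohomology, with the $E_{2}$-collapse supplied by the same localization argument used in the proof sketch of Theorem~\ref{thm:localglobal}; finite global dimension then forces the spectral sequence to degenerate in a single column of length $d+1$, pinning down $\omega_{\SpecGnC{T}}$ up to the required shift.
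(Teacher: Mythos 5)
The paper gives no proof of this theorem at all; it is followed only by a brief Remark, so there is no argument against which to compare your route. Assessed on its own, your proposal identifies the right ingredients (global dimension on the algebraic side, the local--global correspondence, and the dualizing complex from Theorem~\ref{thm:local-duality}), but there are two genuine gaps.

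First, the transport in part~(ii) misuses Theorem~\ref{thm:localglobal}. That theorem identifies ${\ExtG}^{\,r}_{T}(M,N)$ with $H^{r}$ of the \emph{degree-zero} sheaf Ext, i.e.\ with $H^{r}\!\bigl(\SpecGnC{T},{\RExtG}^{\,0}_{\SpecGnC{T}}(\widetilde M,\widetilde N)\bigr)$, not with ${\RExtG}^{\,r}_{\SpecGnC{T}}(\widetilde M,\widetilde N)$. Vanishing of the algebraic $\ExtG^{r}_{T}(T,T)$ for $r>d$ therefore controls the hypercohomology of the $\mathcal{H}\!\mathit{om}$-sheaf in degree~$0$, not the internal sheaf-Ext in degree~$r$. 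To move between these two objects you need the full local-to-global spectral sequence $E_2^{p,q}=H^p\bigl(\SpecGnC{T},{\RExtG}^{\,q}\bigr)\Rightarrow \ExtG^{p+q}_T$ and an argument for when it degenerates; you mention such a spectral sequence only at the end of the proposal, but it must come before, and by itself it does not propagate vanishing from total cohomology to individual $E_2$-columns without an edge argument. You should also check whether the relevant $\ExtG$ is the Hochschild-flavored bimodule $\ExtG$ (over $T$ and a positional opposite) rather than the naive one-sided $\ExtG_T(T,T)$; only the former avoids the immediate collapse $\ExtG^{r}(T,T)=0$ for $r>0$ that would make the non-vanishing claim in degree~$d$ false.

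Second, and more seriously, the identification $\omega_{\SpecGnC{T}}\simeq\Ocal_{\SpecGnC{T}}[d]$ in part~(i) does not follow from finite global dimension. Even in the classical (commutative or non-commutative) setting, regularity (finite global dimension) guarantees the existence of a dualizing complex of finite injective dimension, but it does not force that complex to be a shift of the structure sheaf. That stronger conclusion is precisely the Gorenstein/Calabi--Yau condition, and in the non-commutative bimodule picture it is the Van den Bergh-type statement that the \emph{rigid} dualizing complex is the diagonal bimodule shifted by $d$, which requires an additional hypothesis (self-duality or triviality of the Nakayama twist) beyond finiteness of global dimension. Your remark ``since the structure sheaf is its own resolution up to a shift'' does not establish this; every object is trivially its own length-zero resolution, and this has no bearing on what the dualizing complex is. As written, your Step for part~(i) assumes exactly what must be proved, and the theorem as stated in the paper may itself need a Gorenstein-type hypothesis added for the $d$-Calabi--Yau claim to be correct.
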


\begin{remark}[Moral]
Homological regularity of $T$ translates to geometric smoothness of
$\SpecGnC{T}$; thus derived $\Gamma$-geometry inherits a
Calabi–Yau-type symmetry.
\end{remark}

\subsection*{Categorical and Physical Interpretations}
\begin{remark}[Categorical synthesis]
The equivalences above imply that every non-commutative
$n$-ary $\Gamma$-semiring $T$ determines,
up to derived Morita equivalence,
a unique triangulated category
$\mathbf{D}(\QCoh(\SpecGnC{T}))$.
This category functions as the
\emph{categorical space} of $T$,
its points being simple derived objects,
and morphisms corresponding to
$\ExtG$-interactions.
\end{remark}

\begin{remark}[Physical analogy]
In categorical-physics language,
the decomposition of Theorem~\ref{thm:WA}
corresponds to a spectral decomposition of quantum states,
and the derived Morita equivalence of
Theorem~\ref{thm:Morita}
represents gauge equivalence of field theories
whose configuration spaces are
$\Gamma$-semiring spectra.
Thus, the homological structure discovered here
provides the algebraic scaffolding for a
\emph{non-commutative $\Gamma$-field theory},
where $\ExtG$ and $\TorG$ play the roles of
propagators and interaction vertices.
\end{remark}

\begin{remark}[Final synthesis]
At this summit, algebraic, categorical, and geometric layers cohere:
\[
\text{Homological Algebra}
   \Longleftrightarrow
   \text{Derived Geometry}
   \Longleftrightarrow
   \text{Spectral Physics}.
\]
The $\Gamma$-semiring thus emerges as a universal object
linking arithmetic, topology, and quantum structure.
\end{remark}

\section{Future Directions}
\label{sec:future}

The homological algebra and derived geometry of
non-commutative $n$-ary~$\Gamma$-semirings introduced here
opens a wide landscape of higher-categorical and
cohomological phenomena.  We sketch three principal
directions that naturally extend the present framework.

\subsection{ Higher-Category Enhancements and $(\infty,1)$-Categorical Refinement}
The derived category $\mathbf{D}(\QCoh(\SpecGnC{T}))$
admits an $(\infty,1)$-categorical enhancement
$\mathbf{D}_\infty(\QCoh(\SpecGnC{T}))$
obtained by localizing the dg-nerve of
${\nTGMod{T}}^{\mathrm{bi}}$ along quasi-isomorphisms.The use of mapping spectra, stable $\infty$-categories, and 
monoidal $(\infty,1)$-functors is justified by the foundational 
work of Lurie~\cite{LurieHTT,LurieHA}.

This refinement allows:
\begin{itemize}
  \item The formulation of higher $\ExtG$-spaces
  \[
{\ExtG}^{\,k}_{\!\infty}(M,N)
\]

 as mapping \emph{spectra}
        rather than mere groups;
  \item The development of $\infty$-categorical t-structures,
        stability conditions, and derived hearts
        encoding non-abelian homological behavior;
  \item A unification with $(\infty,2)$-categorical
        Morita theory, where $\Gamma$-semirings act as
        objects in a 2-category of monoidal dg-categories.
\end{itemize}
In this setting, the non-commutative $\Gamma$-spectrum
becomes an \emph{$(\infty,1)$-stack} of stable categories,
and equivalences of $\Gamma$-semirings correspond to
monoidal $(\infty,1)$-functors preserving dualizable
objects.  This direction leads toward a genuine
\emph{homotopy-coherent $\Gamma$-geometry}.

\subsection{ Cohomological Descent and Motivic $\Gamma$-Homotopy}

A next stage is the construction of a motivic homotopy
theory over $\Gamma$-semirings.
Let $\Sm_\Gamma$ denote the category of smooth
$\Gamma$-schemes in the non-commutative sense.
One expects a model category
$\mathbf{SH}_\Gamma$
whose homotopy objects encode $\Gamma$-motives:
\[
\mathbf{SH}_\Gamma
   = \Stab\big(\mathbf{H}_\Gamma(\Sm_\Gamma)\big)
   \simeq \mathbf{DM}_\Gamma^{\mathrm{nc}}.
\]
Cohomological descent for hypercoverings in
$\SpecGnC{T}$ would yield spectral sequences linking
derived $\Gamma$-cohomology to motivic $\Gamma$-homology.
Conceptually, this extends Voevodsky’s motivic program to
the $n$-ary non-commutative realm, providing a bridge between
arithmetic, topology, and homological algebra of
$\Gamma$-structures.
Within this motivic topos,
$\RExtG$ and $\LTorG$ become
cohomological operations,
and the spectral–derived duality of
Theorem~\ref{thm:duality}
acquires a motivic interpretation.

The expected $\Gamma$-motivic homotopy category parallels the 
$\mathbb{A}^1$-homotopy theory of Voevodsky~\cite{Voevodsky1998} 
and Morel–Voevodsky~\cite{MVW2000}, adapted to the 
$n$-ary non-commutative $\Gamma$-context.

\subsection{ Derived Non-Commutative Tropicalization and Quantization}
Tropicalization in the $\Gamma$-context replaces the
multiplicative structure of $T$
by a valuation map
$v:T\to\mathbb{R}\cup\{\infty\}$
compatible with $\Gamma$.
Its non-commutative derived extension associates to each
bi-module complex $M^\bullet$
a piecewise-linear object
$\mathrm{Trop}_\Gamma(M^\bullet)$
living in a polyhedral category enriched over
idempotent $\Gamma$-semirings.
Quantization then re-introduces
non-commutativity through a deformation parameter
$\hbar\in\Gamma$,
realizing a correspondence
\[
\text{Derived Geometry}
   \;\xleftrightarrow[\text{tropical limit}]{\text{quantization}}\;
\text{Non-commutative $\Gamma$-Physics}.
\]
This program suggests the existence of a
\emph{derived non-commutative tropical mirror symmetry}
linking the $\Gamma$-motivic and quantum domains.

\subsection*{Vision}
The unification achieved in this paper reveals that
non-commutative $n$-ary $\Gamma$-semirings are not merely
generalizations of rings but foundational objects of a
new homological cosmos.
Their derived geometry, higher categories, and motivic
descents form the algebraic substratum for a future
theory of \emph{categorical physics}.
In this vision, $\Gamma$ functions as a universal
symmetry parameter mediating between algebraic,
geometric, and physical realities.

\label{'ubl'}  
\end{document}